\documentclass[11pt]{amsart}
\usepackage{amsmath,cite}
\usepackage[colorlinks=true,urlcolor=blue,
citecolor=red,linkcolor=blue,linktocpage,pdfpagelabels,bookmarksnumbered,bookmarksopen]{hyperref}
\usepackage[english]{babel}

\usepackage[left=2.7cm,right=2.7cm,top=2.55cm,bottom=2.55cm]{geometry}

\numberwithin{equation}{section}
\newtheorem{theorem}{Theorem}[section]
\newtheorem{proposition}[theorem]{Proposition}
\newtheorem{lemma}[theorem]{Lemma}
\newtheorem{remark}[theorem]{Remark}
\newtheorem{corollary}[theorem]{Corollary}

\theoremstyle{definition}

\newcommand{\M}{{\mathbb M}}
\newcommand{\N}{{\mathbb N}}
\newcommand{\R}{{\mathbb R}}

\newcommand{\D}{{\mathbb D}}
\renewcommand{\S}{{\mathbb S}}
\newcommand{\dvg}{{\rm div}}

\newcommand{\eps}{\varepsilon}

\title[Global compactness for quasi-linear problems ]{Global compactness for a class of \\
quasi-linear elliptic problems}

\author[C.~Mercuri]{Carlo Mercuri}
\author[M.~Squassina]{Marco Squassina}

\address{Department of Mathematics and Computer Science
\newline\indent
Technische Universiteit Eindhoven
\newline\indent
Postbus 513,
5600 MB Eindhoven
\newline\indent
Holland}
\email{c.mercuri@tue.nl}

\address{Department of Computer Science
\newline\indent
University of Verona
\newline\indent
Strada Le Grazie 15, 37134 Verona
\newline\indent
Italy}
\email{marco.squassina@univr.it}

\thanks{Supported by Miur project: {\em ``Variational and Topological
Methods in the Study of Nonlinear Phenomena''}}
\subjclass[2000]{35D99, 35J62, 58E05, 35J70}

\keywords{Quasi-linear equations, global compactness of Palais-Smale sequences}

\begin{document}

\begin{abstract}
We prove a global compactness result for Palais-Smale sequences associated
with a class of quasi-linear elliptic equations on exterior domains.
\end{abstract}

\maketitle

\section{Introduction and main result}
Let $\Omega$ be a smooth domain of $\R^N$ with a bounded complement and $N>p>m>1$.
The main goal of this paper is to obtain a global compactness result for the Palais-Smale sequences of the
energy functional associated with the following quasi-linear elliptic equation
\begin{equation}\label{eq}
-\dvg(L_\xi(Du))-\dvg(M_\xi(u,Du)) + M_s(u,Du) + V (x)|u|^{p-2}u = g(u) \quad\text{in $\Omega$,}
\end{equation}
where $u\in W^{1,p}_0(\Omega)\cap D^{1,m}_0(\Omega)$, meant as the completion of the
space ${\mathcal D}(\Omega)$ of smooth functions with compact support,
with respect to the norm $\|u\|_{W^{1,p}(\Omega)\cap D^{1,m}(\Omega)}=\|u\|_p+\|u\|_m,$ having set
$\|u\|_p:=\|u\|_{W^{1,p}(\Omega)}$ and $\|u\|_m:=\|Du\|_{L^m(\Omega)}$.
We assume that $V$ is a continuous function on $\Omega$,
$$
\lim_{|x|\to \infty}V(x)=V_\infty
\,\,\quad
\text{and}
\,\,\quad
\inf_{x\in\Omega} V(x)=V_0>0.
$$
As known, lack of compactness may occur
due to the lack of compact embeddings for Sobolev spaces on
$\Omega$ and since the limiting equation on $\R^N$
\begin{equation}\label{eqlim}
	-\dvg(L_\xi(Du))-\dvg(M_\xi(u,Du)) + M_s(u,Du) + V_\infty|u|^{p-2}u = g(u) \quad\text{in $\R^N$},
\end{equation}
with $u\in W^{1,p}(\R^N)\cap D^{1,m}(\R^N)$, is invariant by translations. A particular case of \eqref{eq} is
\begin{equation}\label{eqm2}
-\Delta_pu-\dvg(a(u)|Du|^{m-2}Du) + \frac{1}{m}a'(u)|Du|^m+ V (x)|u|^{p-2}u =|u|^{\sigma-2}u \quad\text{in $\Omega$,}
\end{equation}
where $\Delta_p u:= \dvg (|D u|^{p-2}D u),$ for a suitable function $a\in C^1(\R;\R^+)$, or the even simpler case where $a$ is constant, namely
\begin{equation}\label{eqm1}
-\Delta_pu-\Delta_m u+ V (x)|u|^{p-2}u =|u|^{\sigma-2}u \quad\text{in $\Omega$}.
\end{equation}

Since the pioneering work of Benci and Cerami \cite{bencicer}
dealing with the case $L(\xi)=|\xi|^2/2$ and $M(s,\xi)\equiv 0$, many papers have been written on this subject, see for instance
the bibliography of \cite{mercuriwillem}. Quite recently, in \cite{mercuriwillem},
the case $L(\xi)=|\xi|^p/p$ and $M(s,\xi)\equiv 0$ was investigated. The main point in the present contribution
is the fact that we allow, under suitable assumptions, a quasi-linear term $M(u,Du)$ depending on the unknown $u$ itself.
The typical tools exploited in \cite{bencicer,mercuriwillem}, in addition to the point-wise convergence of the gradients, are
some decomposition (splitting) results both for the energy functional and for the equation, along a given bounded
Palais-Smale sequence $(u_n)$. To this regard, the explicit dependence on $u$ in the term $M(u,Du)$ requires a rather careful analysis.
In particular, we can handle it for
$$
\nu|\xi|^m\leq M(s,\xi)\leq C|\xi|^m,\qquad
p-1 \leq m < p-1+p/N.
$$
The restriction on $m$, together with the sign condition \eqref{ilsegnos} provides, thanks to the presence of $L,$
the needed a priori regularity on the weak limit of $(u_n),$  see Theorems \ref{energytot} and \ref{split2-A}.\\
Besides the aforementioned motivations, which are of mathematical interest,
it is worth pointing out that in recent years, some works have been devoted to quasi-linear operators
with double homogeneity, which arise from several problems of Mathematical Physics. For instance,
the reaction diffusion problem $u_t  = -\dvg(\D(u)Du) + \ell(x,u)$,
where $\D(u) = d_p|Du|^{p-2}+d_m|Du|^{m-2}$, $d_p>0$ and $d_m>0$,
admitting a rather wide range of applications in biophysics \cite{fife}, plasma physics \cite{wilh}
and in the study of chemical reactions \cite{aris}.
In this framework, $u$ typically describes a concentration and $\dvg(\D(u)Du)$
corresponds to the diffusion with a coefficient $\D(u)$, whereas $\ell(x,u)$
plays the r\v ole of reaction and relates to source and loss processes. We refer the interested reader
to \cite{motivaz} and to the reference therein. Furthermore, a model for elementary particles proposed
by Derrick \cite{derrick} yields to the study of standing wave solutions $\psi(x,t)=u(x)e^{{\rm i}\omega t}$
of the following nonlinear Schr\"odinger equation
$$
{\rm i}\psi_t+\Delta_2\psi-b(x)\psi+\Delta_p\psi-V(x)|\psi|^{p-2}\psi+|\psi|^{\sigma-2}\psi=0 \quad\text{in $\R^N$,}
$$
for which we refer the reader e.g.\ to \cite{bencifor}.

\vskip2pt
In order to state the first main result, assume $N>p>m\geq 2$ and
\begin{equation}
	\label{range}
p-1 \leq m < p-1+p/N,
\qquad
p<\sigma<p^*,
\end{equation}
and consider the $C^2$ functions $L:\R^N\to\R$ and $M:\R\times\R^N\to\R$ such that
both the functions $\xi\mapsto L(\xi)$ and $\xi\mapsto M(s,\xi)$ are strictly convex and
\begin{equation}
	\label{growth0}
\nu|\xi|^p\leq |L(\xi)|\leq C|\xi|^p,\quad |L_\xi(\xi)|\leq C|\xi|^{p-1},\quad |L_{\xi \xi}(\xi)| \leq C |\xi|^{p-2},
\end{equation}
for all $\xi\in\R^N$. Furthermore, we assume
\begin{align}
	\label{growths1}
 \nu|\xi|^m\leq M(s,\xi)|&\leq C|\xi|^m,
\quad\,\, |M_s(s,\xi)|\leq C|\xi|^{m},
\quad\,\,
|M_\xi(s,\xi)|\leq C|\xi|^{m-1}, \\
\label{growths2}
 |M_{ss}(s,\xi)|&\leq C|\xi|^m,
\quad\,\, |M_{s\xi}(s,\xi)|\leq C |\xi|^{m-1},
\quad\,\, |M_{\xi \xi}(s,\xi)|\leq C |\xi|^{m-2},
\end{align}
for all $(s,\xi)\in\R\times\R^N$ and that the sign condition (cf. \cite{squbook})
\begin{equation}
	\label{ilsegnos}
M_s(s,\xi)s\geq 0,
\end{equation}
holds for all $(s,\xi)\in\R\times\R^N$. Also, $G:\R\to\R$ is a $C^2$ function with $G'(s):=g(s)$ and
\begin{equation}
	\label{ggrow}
|G'(s)|\leq C|s|^{\sigma-1},\quad
|G''(s)|\leq C |s|^{\sigma -2},
\end{equation}
for all $s\in\R$. We define
\begin{equation}
	\label{jdef}
j(s,\xi):=L(\xi)+M(s,\xi)-G(s),
\end{equation}
and on $W^{1,p}_0(\Omega)\cap D^{1,m}_0(\Omega)$ with $\|u\|_{W^{1,p}(\Omega)\cap D^{1,m}(\Omega)}=\|u\|_p+\|u\|_m$ the functional
$$
\phi(u):=\int_{\Omega}j(u,Du)+\int_{\Omega} V(x)\frac{|u|^p}{p}.
$$
Finally, on $W^{1,p}(\R^N)\cap D^{1,m}(\R^N)$ with $\|u\|_{W^{1,p}(\R^N)\cap D^{1,m}(\R^N)}=\|u\|_p+\|u\|_m$ we define
$$
\phi_\infty(u):=\int_{\R^N}j(u,Du)+\int_{\R^N}V_\infty\frac{|u|^p}{p}.
$$
See Section~\ref{prelimsection} for some properties of the functionals $\phi$ and $\phi_\infty$.
\vskip3pt
\noindent
The first main global compactness type result is the following

\begin{theorem}
	\label{main}
Assume that \eqref{range}-\eqref{jdef} hold and
let $(u_n)\subset W^{1,p}_0(\Omega)\cap D^{1,m}_0(\Omega)$ be a bounded sequence such that
$$
\phi(u_n)\to c \quad \quad \phi'(u_n)\to 0 \quad \text{in $(W^{1,p}_0(\Omega)\cap D^{1,m}_0(\Omega))^*$}
$$
Then, up to a subsequence, there exists a
weak solution $v_0\in W^{1,p}_0(\Omega)\cap D^{1,m}_0(\Omega)$ of
$$
-\dvg(L_\xi(Du))-\dvg(M_\xi(u,Du)) + M_s(u,Du) + V (x)|u|^{p-2}u = g(u) \quad\text{in $\Omega$,}
$$
a finite sequence $\{v_1,...,v_k\}\subset  W^{1,p}(\R^N)\cap D^{1,m}(\R^N)$ of weak solutions of
$$
-\dvg(L_\xi(Du))-\dvg(M_\xi(u,Du)) + M_s(u,Du) + V_\infty|u|^{p-2}u = g(u) \quad \text{in $\R^N$}
$$
and $k$ sequences $(y^i_n)\subset\R^N$  satisfying
$$
|y^i_n|\to \infty,\quad |y^i_n-y^j_n|\to \infty,\quad i\neq j, \quad \text{as $n\to \infty$,}
$$
$$
\|u_n-v_0-\sum^k_{i=1}v_i ((\cdot-y^i_n)\|_{W^{1,p}(\R^N)\cap D^{1,m}(\R^N)}\to 0, \qquad\text{as $n\to \infty$,}
$$
$$
\|u_n\|_p^p\to \sum^k_{i=0}\|v_i\|^p_p,\qquad
\|u_n\|_m^m\to \sum^k_{i=0}\|v_i\|^m_m,\qquad\text{as $n\to \infty$,}
$$
as well as
$$
\phi(v_0)+\sum^k_{i=1}\phi_\infty (v_i)=c.
$$
\end{theorem}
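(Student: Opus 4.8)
The plan is to run a Struwe--Lions type iteration on top of the splitting results of the previous sections. Since $W^{1,p}_0(\Omega)\cap D^{1,m}_0(\Omega)$ is reflexive, along a subsequence $u_n\rightharpoonup v_0$ and $u_n\to v_0$ a.e.; using the pointwise convergence of the gradients (available here thanks to the strict monotonicity of $L_\xi$, the sign condition \eqref{ilsegnos} and the a priori regularity granted by the range \eqref{range} via the $p$--growth term $L$), Theorems \ref{energytot} and \ref{split2-A} apply and give that $v_0$ is a weak solution of \eqref{eq}, that the residual $u^1_n:=u_n-v_0$, extended by zero to $\R^N$, is a Palais--Smale sequence for $\phi_\infty$ at level $c-\phi(v_0)$, with $\|u_n\|_p^p=\|v_0\|_p^p+\|u^1_n\|_p^p+o(1)$ and $\|u_n\|_m^m=\|v_0\|_m^m+\|u^1_n\|_m^m+o(1)$; moreover $u^1_n\rightharpoonup 0$ in $W^{1,p}(\R^N)\cap D^{1,m}(\R^N)$.

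I then record a dichotomy. Let $(w_n)$ be a bounded Palais--Smale sequence for $\phi_\infty$ with $w_n\rightharpoonup 0$. Pairing $\phi_\infty'(w_n)$ with $w_n$ and using that convexity together with \eqref{growth0}--\eqref{growths1} gives $L_\xi(\xi)\cdot\xi\ge\nu|\xi|^p$ and $M_\xi(s,\xi)\cdot\xi\ge\nu|\xi|^m$, that \eqref{ilsegnos} makes $\int_{\R^N}M_s(w_n,Dw_n)w_n\ge 0$, that $V_\infty\ge V_0>0$, and that \eqref{ggrow} bounds $\int_{\R^N}g(w_n)w_n$ by $C\|w_n\|_\sigma^\sigma$, one gets a constant $\kappa>0$ with $\kappa\,(\|w_n\|_p^p+\|w_n\|_m^m)\le C\|w_n\|_\sigma^\sigma+o(1)$; hence $w_n\to 0$ in $L^\sigma(\R^N)$ implies $w_n\to0$ in $W^{1,p}(\R^N)\cap D^{1,m}(\R^N)$. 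If instead $w_n\not\to0$ in $L^\sigma$, then, since $p<\sigma<p^*$, Lions' vanishing lemma (for $(w_n)$ bounded in $W^{1,p}(\R^N)$) yields $r>0$ and $(y_n)\subset\R^N$ with $\liminf_n\int_{B_r(y_n)}|w_n|^\sigma>0$; since $w_n\rightharpoonup0$ and $W^{1,p}\hookrightarrow L^\sigma_{\mathrm{loc}}$ is compact, $|y_n|\to\infty$, so for $n$ large the translates $\tilde w_n:=w_n(\cdot+y_n)$ belong to $W^{1,p}(\R^N)\cap D^{1,m}(\R^N)$ (recall $\R^N\setminus\Omega$ is bounded) and, up to a subsequence, $\tilde w_n\rightharpoonup w\ne0$. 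By translation invariance $(\tilde w_n)$ is again Palais--Smale for $\phi_\infty$ at the same level, so Theorems \ref{energytot} and \ref{split2-A}, applied on $\R^N$ with $V\equiv V_\infty$, give that $w$ solves \eqref{eqlim} weakly, that $w_n-w(\cdot-y_n)=(\tilde w_n-w)(\cdot-y_n)$ is a Palais--Smale sequence for $\phi_\infty$ at level lowered by $\phi_\infty(w)$, with $\|w_n\|_p^p=\|w\|_p^p+\|w_n-w(\cdot-y_n)\|_p^p+o(1)$ and likewise for $\|\cdot\|_m^m$.

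Starting from $w_n=u^1_n$ and iterating, at step $j$ we either stop (when the current residual tends to $0$ in norm) or obtain $v_j\ne0$, $(y^j_n)$ with $|y^j_n|\to\infty$, and the new residual $u^{j+1}_n:=u^j_n-v_j(\cdot-y^j_n)=u_n-v_0-\sum_{i=1}^{j}v_i(\cdot-y^i_n)$, which is Palais--Smale for $\phi_\infty$ at level $c-\phi(v_0)-\sum_{i=1}^{j}\phi_\infty(v_i)$, satisfies $u^{j+1}_n\rightharpoonup0$, and obeys $\|u^j_n\|_\bullet^\bullet=\|v_j\|_\bullet^\bullet+\|u^{j+1}_n\|_\bullet^\bullet+o(1)$ for $\bullet\in\{p,m\}$. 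The centres pairwise diverge, $|y^i_n-y^j_n|\to\infty$ for $i\ne j$, by the usual argument: for $i<j$ write $u^j_n=u^i_n-\sum_{l=i}^{j-1}v_l(\cdot-y^l_n)$ and evaluate at $\cdot+y^j_n$; were $y^j_n-y^i_n$ bounded, the $l=i$ term would cancel a translate of $v_i$ while the remaining terms converge weakly to $0$, forcing $u^j_n(\cdot+y^j_n)\rightharpoonup0$, contrary to $u^j_n(\cdot+y^j_n)\rightharpoonup v_j\ne0$. The iteration stops after finitely many steps: testing \eqref{eqlim} against $v_i$ and arguing as in the dichotomy (using $\sigma>p>m$ and the Sobolev embedding) gives a uniform bound $\|v_i\|_p^p+\|v_i\|_m^m\ge\delta>0$, while iterating the norm identities gives $\sum_{i=0}^{j}(\|v_i\|_p^p+\|v_i\|_m^m)\le\limsup_n(\|u_n\|_p^p+\|u_n\|_m^m)<\infty$ for every $j$. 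Denoting by $k$ the last step, we have $u^{k+1}_n\to0$ in $W^{1,p}(\R^N)\cap D^{1,m}(\R^N)$; taking $j=k$ above yields $u_n-v_0-\sum_{i=1}^{k}v_i(\cdot-y^i_n)\to0$ in norm, the two norm identities, and --- since $\phi_\infty$ is continuous and $\phi_\infty(0)=0$, so $\phi_\infty(u^{k+1}_n)\to0$ --- the energy identity $\phi(v_0)+\sum_{i=1}^{k}\phi_\infty(v_i)=c$.

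The main obstacle is contained in the splitting Theorems \ref{energytot} and \ref{split2-A} invoked above: because $M_s(u,Du)$ carries the natural growth $|Du|^m$ and is only an $L^1$ term, neither identifying $v_0$ and the $v_i$ as weak solutions nor decomposing the energy and the differential along the residuals is routine; this is precisely where the sign condition \eqref{ilsegnos}, the presence of the $p$--growth operator $L$, and the restriction $p-1\le m<p-1+p/N$ --- which keeps the pairing $\int M_s(u,Du)\,u$ meaningful and yields the a priori regularity of the weak limits --- enter, as announced in the introduction. Granting these, the concentration--compactness dichotomy, the separation of the translation centres and the termination of the iteration are classical.
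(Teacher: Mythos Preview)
Your proposal is correct and follows essentially the same Struwe--Lions iteration as the paper's proof: extract the weak limit $v_0$ via Proposition~\ref{convergenze}, apply Theorems~\ref{energytot} and~\ref{split2-A} (and, for the iteration step after translation, the paper packages this as Lemma~\ref{likebook}), run the Lions vanishing dichotomy, and terminate via a uniform lower bound on the norm of nontrivial critical points of $\phi_\infty$. The only cosmetic differences are that the paper uses the $L^p$ concentration function $\varpi=\limsup_n\sup_y\int_{B(y,1)}|u^1_n|^p$ rather than $L^\sigma$, and that you spell out the pairwise divergence $|y^i_n-y^j_n|\to\infty$ explicitly while the paper leaves it implicit.
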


\noindent
Let us now come to a statement for the cases $1<m\leq 2$ or $1<p\leq 2$. Let us define
\begin{align*}
 &\mathfrak{L}(\xi,h):=\frac{|L_\xi(\xi+h)-L_\xi(\xi)|}{|h|^{p-1}}, \qquad \text{if}\,\, 1<p<2,  \\
 & \mathfrak{G}(s,t):= \frac{|G'(s+t)-G'(s)|}{|t|^{\sigma-1}},\qquad \text{if}\,\, 1<\sigma<2,  \\
 &  \mathfrak{M}(s,\xi,h):=\frac{|M_\xi(s,\xi+h)-M_\xi(s,\xi)|}{|h|^{m-1}},\qquad \text{if}\,\, 1<m<2.
\end{align*}
If either $p<2,$ $\sigma<2$ or $m<2$, we shall weaken the twice differentiability
assumptions, by requiring $L_\xi\in C^1(\R^N\setminus\{0\})$,
$G'\in C^1(\R\setminus\{0\})$, $M_\xi \in C^1(\R\times (\R^N\setminus\{0\}))$,
$M_{s\xi}\in C^0(\R\times\R^N)$ and $M_{ss}\in C^0(\R\times\R^N)$.
Moreover we assume the same growth conditions for $L,M,G$ and
their derivatives, replacing only the growth assumptions for $L_{\xi\xi}, M_{\xi\xi},G''$ by the following hypotheses:
\begin{align}
	\label{ass-sub1}
 & \sup_{h\neq 0,\,\xi\in\R^N}\mathfrak{L}(\xi,h)<\infty, \\
\label{ass-sub2}
 & \sup_{t\neq 0,\, s\in\R} \mathfrak{G}(s,t)<\infty, \\
\label{ass-sub3}
 &  \sup_{h\neq 0,\, (s,\xi)\in\R\times\R^N} \mathfrak{M}(s,\xi,h)<\infty.
\end{align}
Conditions \eqref{ass-sub1}-\eqref{ass-sub2}, in some more concrete situations, follow immediately by homogeneity of
$L_\xi$ and $G'$ (see, for instance, \cite[Lemma 3.1]{mercuriwillem}).
Similarly, \eqref{ass-sub3} is satisfied for instance when $M$ is of the form $M(s,\xi)=a(s)\mu(\xi)$, being $a:\R\to\R^+$
a bounded function and $\mu:\R^N\to\R^+$ a $C^1$ strictly convex function such that $\mu_\xi$ is homogeneous of degree $m-1$.


\begin{theorem}
	\label{main2}
Under the additional assumptions \eqref{ass-sub1}-\eqref{ass-sub3} in the sub-quadratic cases, the assertion of Theorem~\ref{main} holds true.
\end{theorem}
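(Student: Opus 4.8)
The plan is to revisit the entire proof of Theorem~\ref{main}, together with the auxiliary splitting statements Theorems~\ref{energytot} and~\ref{split2-A}, and to isolate the finitely many places where the full $C^2$ regularity of $L$, $M$, $G$ is actually used --- more precisely, the pointwise bounds on $L_{\xi\xi}$, $M_{\xi\xi}$, $G''$. Inspection shows that these bounds enter only through second-order Taylor expansions of the type $|L(\xi+h)-L(\xi)-L_\xi(\xi)\cdot h|\le C|\xi|^{p-2}|h|^2$, $|G(s+t)-G(s)-g(s)t|\le C|s|^{\sigma-2}t^2$, and the analogous expansion of $M(s,\cdot)$ in $\xi$, all of which appear in the Brezis--Lieb type decompositions of $\phi$, $\phi_\infty$ and of their differentials along the (translated) Palais--Smale sequence. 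The strategy is then to replace each such second-order estimate by the corresponding first-order estimate with remainder controlled by \eqref{ass-sub1}--\eqref{ass-sub3}; for instance, integrating $\mathfrak{L}$ along a segment gives $|L(\xi+h)-L(\xi)-L_\xi(\xi)\cdot h|\le C|h|^p$, and likewise $|G(s+t)-G(s)-g(s)t|\le C|t|^\sigma$ and the $\xi$-expansion of $M$ with remainder $O(|h|^m)$. This is exactly the device already exploited in \cite[Lemma 3.1]{mercuriwillem} for the pure $p$-Laplacian, and it is fully compatible with the dominated-convergence and Nemytskii-continuity arguments in the proof, since $|h|^p$, $|t|^\sigma$, $|h|^m$ are integrable whenever the functions at hand lie in $W^{1,p}\cap D^{1,m}$.

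Concretely, I would proceed as follows. First, re-derive the Brezis--Lieb splitting of $\phi$ and of $\phi_\infty$, as well as of $\phi'$ and $\phi_\infty'$, keeping track only of the $\xi$- and $s$-increments and inserting the replacement estimates above wherever a bound on $L_{\xi\xi}$, $M_{\xi\xi}$ or $G''$ was invoked; all the genuine compactness steps are untouched --- the a.e.\ convergence $Du_n\to Dv_0$ (which rests on the strict convexity/monotonicity of $L_\xi$ and is insensitive to the sub- or super-quadratic dichotomy), the iteration producing the bubbles $v_i$ and the translations $y_n^i$, the decoupling of the $L^p$- and $L^m$-norms, and the final energy identity. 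Second, redo the ``equation at infinity'' step: one must still show that if $w_n\rightharpoonup 0$ and $|y_n|\to\infty$ then the differential of $\phi$ at $v_0+w_n(\cdot-y_n)$ splits, up to $o(1)$, as $\phi'(v_0)+\phi_\infty'(w_n)(\cdot-y_n)$, and here the uniform control of the cross terms follows from \eqref{ass-sub1}--\eqref{ass-sub3} applied to $L_\xi$, $M_\xi$, $g$ together with the retained growth of $M_s$, $M_{ss}$, $M_{s\xi}$. Third, carry the same substitutions through Theorems~\ref{energytot} and~\ref{split2-A}, which provide the a priori regularity of the weak limit coming from $L$ and the sign condition \eqref{ilsegnos}; since these steps never touch $L_{\xi\xi}$, $M_{\xi\xi}$, $G''$, at most cosmetic changes are needed. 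The various case splits ($p>2\ge m$, $2\ge p>m$, $\sigma<2$) are handled uniformly by this scheme, as only the relevant one(s) of \eqref{ass-sub1}--\eqref{ass-sub3} are used in each.

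The main obstacle is the explicit dependence of $M$ on $u$ in the sub-quadratic regime, because \eqref{ass-sub3} controls only the $\xi$-increment of $M(s,\cdot)$ at a frozen value of $s$, whereas the splitting requires comparing $M(u_n,Du_n)$ with $M(v_0,Dv_0)+\sum_i M(v_i,Dv_i)(\cdot-y_n^i)$, i.e.\ a simultaneous increment in both slots. I would handle this by the telescoping
\[
M(s+t,\xi+h)-M(s,\xi)=\bigl[M(s+t,\xi+h)-M(s,\xi+h)\bigr]+\bigl[M(s,\xi+h)-M(s,\xi)\bigr],
\]
and symmetrically for $M_\xi$. The first bracket in each case is a pure $s$-increment, estimated by the mean value theorem and the retained growth and continuity of $M_s$, $M_{ss}$, $M_{s\xi}$; since these hypotheses are \emph{identical} to the ones in Theorem~\ref{main}, these terms require no new argument and are dominated exactly as there. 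The second bracket is a pure $\xi$-increment at frozen $s$, now controlled by \eqref{ass-sub3} in place of the bound on $M_{\xi\xi}$. Once this uniform cross-term control is in place, the remaining arguments run verbatim as in the proof of Theorem~\ref{main}, yielding the same conclusion under the additional assumptions \eqref{ass-sub1}--\eqref{ass-sub3}.
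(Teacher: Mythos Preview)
Your overall strategy is correct and coincides with the paper's: the only place where the bounds on $L_{\xi\xi}$, $M_{\xi\xi}$, $G''$ are actually used is the \emph{equation} splitting (Theorem~\ref{split2-A}), and the paper replaces it in the sub-quadratic case by Theorem~\ref{split2-B}, which is precisely your telescoping argument --- write $M_\xi(s-u,\xi-Du)-M_\xi(s,\xi)$ as a pure $s$-increment (handled via the retained $M_{\xi s}$ growth) plus a pure $\xi$-increment at frozen $s$ (handled directly by \eqref{ass-sub3}), and use \eqref{ass-sub1}, \eqref{ass-sub2} for the $L_\xi$ and $G'$ differences.

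One point of mild confusion in your write-up: the \emph{energy} splitting (Lemma~\ref{energysplit} and hence Theorem~\ref{energytot}) never uses second derivatives at all --- its proof applies the mean value theorem to $t\mapsto j(s-tu,\xi-tDu)$ and only the first-order growths of $L_\xi$, $M_s$, $M_\xi$, $G'$ enter. So the estimates you propose of the form $|L(\xi+h)-L(\xi)-L_\xi(\xi)\cdot h|\le C|h|^p$ are unnecessary (though true), and your remark that ``Theorems~\ref{energytot} and~\ref{split2-A} \ldots\ never touch $L_{\xi\xi}$, $M_{\xi\xi}$, $G''$'' is off for~\ref{split2-A}, which is exactly the result that \emph{does} use them and must be re-proved. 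With that bookkeeping corrected, your proof goes through and matches the paper's.
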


\noindent
As a consequence of the above results we have the following compactness criterion.

\begin{corollary}\label{compat}
Assume \eqref{bounddc} below for some $\delta>0$ and $\mu>p$. Under the hypotheses of Theorem \ref{main} or \ref{main2}, if
$c<c^*,$ then $(u_n)$ is relatively compact in $W^{1,p}_0(\Omega)\cap D^{1,m}_0(\Omega)$ where
$$
c^*:=\min\left\{\frac{\delta}{\mu},\frac{\mu-p}{\mu p}V_\infty\right\}\left[\frac{\min\{\nu, V_\infty\}}{C_g S_{p,\sigma}}\right]^{\frac{p}{\sigma-p}},
$$
and $S_{p,\sigma}$ and $C_{g}$ are constants such that
$S_{p,\sigma}\|u\|^\sigma_p\geq \|u\|^\sigma_{L^\sigma(\R^N)}$
and $|g(s)|\leq C_{g}|s|^{\sigma-1}$.
\end{corollary}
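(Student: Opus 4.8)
The plan is to run the standard energy--threshold argument of Benci--Cerami type and to show that, below the level $c^*$, the sequence $(u_n)$ cannot develop any bubble at infinity. First I would apply Theorem~\ref{main} (respectively Theorem~\ref{main2}) to the given bounded Palais--Smale sequence at level $c$, obtaining a weak solution $v_0$ of the equation on $\Omega$, a finite family $\{v_1,\dots,v_k\}$ of nontrivial weak solutions of the limiting equation on $\R^N$, sequences $(y^i_n)$ with $|y^i_n|\to\infty$, the asymptotic splitting $\|u_n-v_0-\sum_{i=1}^{k}v_i(\cdot-y^i_n)\|_{W^{1,p}(\R^N)\cap D^{1,m}(\R^N)}\to0$, and the energy identity $c=\phi(v_0)+\sum_{i=1}^{k}\phi_\infty(v_i)$. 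Since the splitting gives $\|u_n-v_0\|_{W^{1,p}_0(\Omega)\cap D^{1,m}_0(\Omega)}\to0$ as soon as $k=0$, it suffices to prove that $k=0$ under the hypothesis $c<c^*$.

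\emph{A uniform energy gap for the limiting equation.} Let $v\neq0$ be a weak solution of the limiting equation on $\R^N$. Testing it with $v$ itself --- legitimate thanks to the a priori regularity of weak solutions underlying Theorems~\ref{energytot} and \ref{split2-A} --- gives $\phi_\infty'(v)[v]=0$. By the convexity of $L(\cdot)$ and $M(s,\cdot)$ together with the lower bounds in \eqref{growth0} and \eqref{growths1}, one has $L_\xi(\xi)\cdot\xi\ge L(\xi)\ge\nu|\xi|^p$ and $M_\xi(s,\xi)\cdot\xi\ge M(s,\xi)\ge\nu|\xi|^m\ge0$, while $M_s(v,Dv)v\ge0$ by \eqref{ilsegnos}; discarding the nonnegative $\int_{\R^N}|Dv|^m$ and $M_s$ contributions in $\phi_\infty'(v)[v]=0$ and using $|g(s)|\le C_g|s|^{\sigma-1}$ and $S_{p,\sigma}\|v\|_p^\sigma\ge\|v\|^\sigma_{L^\sigma(\R^N)}$ one obtains
\[
\min\{\nu,V_\infty\}\,\|v\|_p^p\ \le\ \int_{\R^N}g(v)v\ \le\ C_g\,S_{p,\sigma}\,\|v\|_p^\sigma ,
\]
and therefore $\|v\|_p^{\sigma-p}\ge\min\{\nu,V_\infty\}/(C_g\,S_{p,\sigma})$, since $v\neq0$. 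On the other hand, writing $\phi_\infty(v)=\phi_\infty(v)-\tfrac1\mu\phi_\infty'(v)[v]$ and invoking the structural hypothesis \eqref{bounddc} (an Ambrosetti--Rabinowitz type condition, with $\mu>p$ and $\delta>0$) on the terms stemming from $j$, together with the identity $V_\infty(\tfrac1p-\tfrac1\mu)=V_\infty\frac{\mu-p}{\mu p}$ on the potential term, one gets
\[
\phi_\infty(v)\ \ge\ \frac{\delta}{\mu}\int_{\R^N}|Dv|^p+\frac{\mu-p}{\mu p}\,V_\infty\int_{\R^N}|v|^p\ \ge\ \min\Big\{\frac{\delta}{\mu},\frac{\mu-p}{\mu p}V_\infty\Big\}\,\|v\|_p^p .
\]
Combining the last two displays, every nontrivial solution $v$ of the limiting equation satisfies $\phi_\infty(v)\ge c^*$.

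\emph{Conclusion.} The very same computation applied to $v_0$, now with $V(x)$ in place of $V_\infty$ and using only $\inf_\Omega V=V_0>0$ and $\mu>p$, shows that $\phi(v_0)=\phi(v_0)-\tfrac1\mu\phi'(v_0)[v_0]\ge0$. Hence, if $k\ge1$, the energy identity forces $c=\phi(v_0)+\sum_{i=1}^{k}\phi_\infty(v_i)\ge\phi_\infty(v_1)\ge c^*$, contradicting $c<c^*$. Therefore $k=0$, so $\|u_n-v_0\|_{W^{1,p}_0(\Omega)\cap D^{1,m}_0(\Omega)}\to0$ and $(u_n)$ is relatively compact.

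\emph{The main obstacle.} I expect the only genuinely delicate point to be the justification of the Nehari-type identities $\phi_\infty'(v)[v]=0$ and $\phi'(v_0)[v_0]=0$. Because of the lower-order term $M_s(u,Du)$, whose growth $|M_s(s,\xi)|\le C|\xi|^m$ makes it merely an $L^1$-quantity against unbounded test functions, the functionals $\phi$ and $\phi_\infty$ need not be differentiable along the direction of the solution itself, so one should instead test with the truncations of $v$ (and of $v_0$) at height $R$ and pass to the limit $R\to\infty$, relying on the boundedness or higher integrability of weak solutions guaranteed by the restriction \eqref{range} and the sign condition \eqref{ilsegnos}; this is exactly the mechanism behind Theorems~\ref{energytot} and \ref{split2-A}. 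Once these identities are secured, the remaining estimates are the elementary convexity and Sobolev inequalities displayed above.
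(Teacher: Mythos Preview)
Your argument is correct and essentially identical to the paper's: both derive the lower bound $\|v\|_p^p\ge\Gamma_\infty$ from testing the limiting equation with $v$, combine it with the Ambrosetti--Rabinowitz inequality $\mu\phi_\infty(v)\ge\delta\|Dv\|_{L^p}^p+\frac{\mu-p}{p}V_\infty\|v\|_{L^p}^p$ to get $\phi_\infty(v)\ge c^*$, and conclude $k=0$ from the energy identity. The obstacle you flag is not actually present here: Section~\ref{prelimsection} (Proposition~2.1) already establishes that $\phi$ and $\phi_\infty$ are $C^1$ on the full space $W^{1,p}_0(\Omega)\cap D^{1,m}_0(\Omega)$, precisely because the restriction $p-1\le m$ forces $p/(p-m)\le p^*$ and hence $M_s(u,Du)v\in L^1$ for every $v$ in the space, so testing with the solution itself is legitimate without truncation.
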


\begin{remark}\rm
It would be interesting to get a global compactness result in the case $L=0$
and $p=m$, namely for the model case
\begin{equation}\label{eqm3}
-\dvg(a(u)|Du|^{m-2}Du) + \frac{1}{m}a'(u)|Du|^m+ V (x)|u|^{m-2}u =|u|^{\sigma-2}u \quad\text{in $\Omega$.}
\end{equation}
Notice that, even assuming $a'$ bounded, $a'(u)|Du|^m$ is merely in $L^1(\Omega)$
for $W^{1,m}_0(\Omega)$ distributional solutions. In general, in this setting, the splitting properties of the equation
are hard to formulate in a reasonable fashion.
\end{remark}

\begin{remark}\rm
The restriction of between $m$ and $p$ in assumption \eqref{range} is no longer needed in the case where
$M$ is independent of the first variable $s$, namely $M_s\equiv 0$.
\end{remark}
	
\begin{remark}\rm
We prove the above theorems under the a-priori boundedness assumption of $(u_n).$ This occurs in a
quite large class of problems, as Proposition \ref{bddd} shows.
\end{remark}

\begin{remark}\rm
With no additional effort, we could cover the case where an additional term $W(x)|u|^{m-2}u$ appears
in \eqref{eq} and the functional framework turns into $W^{1,p}_0(\Omega)\cap W^{1,m}_0(\Omega)$.
\end{remark}

\noindent
In the spirit of~\cite{lions}, we also get the following

\begin{corollary}\label{minsolve}
Let $N>p\geq m>1$ and assume that $\xi\mapsto L(\xi)$ is $p$-homogeneous, $\xi\mapsto M(\xi)$ is $m$-homogeneous,
$L(\xi)\geq p|\xi|^p$, $M(\xi)\geq m|\xi|^m$ and set
\begin{align}
	\label{mminbbpp}
& \S_\Omega:=\inf_{\|u\|_{L^\sigma(\Omega)}=1} \int_\Omega \frac{L(Du)}{p}+\frac{M(Du)}{m}+\frac{V(x)}{p}|u|^p,  \\
& \S_{\R^N}:=\inf_{\|u\|_{L^\sigma(\R^N)}=1} \int_{\R^N} \frac{|Du|^p}{p}+\frac{|u|^p}{p}, \notag
\end{align}
with $V(x)\to 1$ as $|x|\to\infty$. Assume furthermore that
\begin{equation}
\label{compactnesscondd}
\S_{\Omega}<\Big(\frac{\sigma-p}{\sigma -m}\Big)^{\frac{\sigma-p}{\sigma}}\S_{\R^N}.
\end{equation}
Then \eqref{mminbbpp} admits a minimizer.
\end{corollary}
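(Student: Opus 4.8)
The plan is to run a Lions-type concentration-compactness argument on a minimizing sequence for \eqref{mminbbpp}, which I would first promote to a Palais-Smale sequence so that the profile decomposition of Theorem~\ref{main} (or~\ref{main2}) becomes available, and then exploit \eqref{compactnesscondd} to exclude any loss of mass at infinity.

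Concretely, I would start from $(u_n)\subset W^{1,p}_0(\Omega)\cap D^{1,m}_0(\Omega)$ with $\|u_n\|_{L^\sigma(\Omega)}=1$ and $\Phi_\Omega(u_n):=\int_\Omega\tfrac1pL(Du_n)+\tfrac1mM(Du_n)+\tfrac1pV(x)|u_n|^p\to\S_\Omega$. Since $\tfrac1pL(\xi)\ge|\xi|^p$, $\tfrac1mM(\xi)\ge|\xi|^m\ge0$ and $V_0:=\inf_\Omega V>0$, the sequence is bounded in $W^{1,p}_0(\Omega)\cap D^{1,m}_0(\Omega)$, and $\S_\Omega>0$ by the embedding $W^{1,p}_0(\Omega)\hookrightarrow L^\sigma(\Omega)$. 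By Ekeland's principle on the manifold $\{\|u\|_{L^\sigma(\Omega)}^\sigma=1\}$ one may assume $(u_n)$ is a constrained Palais-Smale sequence, so there are multipliers $\mu_n$ with $\Phi_\Omega'(u_n)-\mu_n(\|\cdot\|_{L^\sigma(\Omega)}^\sigma)'(u_n)\to0$; testing with $u_n$ and using Euler's identities for the homogeneous integrands gives $\sigma\mu_n=\int_\Omega L(Du_n)+\int_\Omega M(Du_n)+\int_\Omega V|u_n|^p+o(1)$, which is bounded and, by the Sobolev inequality, bounded away from $0$, so along a subsequence $\sigma\mu_n\to\mu>0$. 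Then $(u_n)$ is a bounded Palais-Smale sequence at level $c:=\S_\Omega-\tfrac\mu\sigma$ for $\phi:=\Phi_\Omega-\tfrac\mu\sigma\|\cdot\|_{L^\sigma(\Omega)}^\sigma$, which has exactly the form required in Theorem~\ref{main}, with principal parts $\tfrac1pL$ and $\tfrac1mM$ (still $p$-, resp.\ $m$-homogeneous, strictly convex, squeezed between $|\xi|^p$, resp.\ $|\xi|^m$, and a constant multiple thereof) and $g(s)=\mu|s|^{\sigma-2}s$: since $M$ is independent of $s$ the coupling condition \eqref{range} is not needed (cf.\ the Remark on $M_s\equiv0$), and the growth hypotheses --- including \eqref{ass-sub1}--\eqref{ass-sub3} when one of $p,m,\sigma$ is below $2$ --- follow from homogeneity; when $p=m$ the two principal parts merge and one invokes \cite{mercuriwillem} instead.

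Applying Theorem~\ref{main} (or~\ref{main2}) I would obtain, along a subsequence, $u_n=v_0+\sum_{i=1}^k v_i(\cdot-y_n^i)+o(1)$ in $W^{1,p}(\R^N)\cap D^{1,m}(\R^N)$, with $v_0$ a weak solution over $\Omega$, each $v_i$ (non-trivial, after discarding null profiles) a weak solution over $\R^N$ with $V_\infty=1$ and the same $g$, $|y_n^i|\to\infty$, and $\phi(v_0)+\sum_i\phi_\infty(v_i)=c$. Because $W^{1,p}(\R^N)\hookrightarrow L^\sigma(\R^N)$ and the translates drift apart, $1=\|u_n\|_{L^\sigma(\Omega)}^\sigma\to a_0+\sum_{i=1}^k a_i$ with $a_0:=\|v_0\|_{L^\sigma(\Omega)}^\sigma\ge0$, $a_i:=\|v_i\|_{L^\sigma(\R^N)}^\sigma>0$; substituting $\phi=\Phi_\Omega-\tfrac\mu\sigma\|\cdot\|_{L^\sigma}^\sigma$ and $\phi_\infty=\Phi_\infty-\tfrac\mu\sigma\|\cdot\|_{L^\sigma}^\sigma$ into the energy identity, the multiplier terms cancel against $a_0+\sum a_i=1$, leaving $\Phi_\Omega(v_0)+\sum_{i=1}^k\Phi_\infty(v_i)=\S_\Omega$. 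Next, the $L^\sigma$-preserving dilation $w\mapsto t^{1/\sigma}w$, the $p$-, resp.\ $m$-homogeneity of the principal parts, the bound $t^{m/\sigma}\ge t^{p/\sigma}$ for $0<t\le1$ (valid since $m\le p$) and positivity of the $m$-part yield $\Phi_\Omega(v_0)\ge a_0^{p/\sigma}\S_\Omega$ and $\Phi_\infty(v_i)\ge a_i^{p/\sigma}\S_\infty$, where $\S_\infty:=\inf_{\|w\|_{L^\sigma(\R^N)}=1}\Phi_\infty(w)$; and estimating $\Phi_\infty(w)\ge\int_{\R^N}|Dw|^p+|Dw|^m+\tfrac1p|w|^p$ and minimizing over the mass-preserving dilations $w_\lambda(x):=\lambda^{N/\sigma}w(\lambda x)$ --- whose three terms scale with exponents $p-\tfrac{N(\sigma-p)}\sigma$, $m-\tfrac{N(\sigma-m)}\sigma$, $-\tfrac{N(\sigma-p)}\sigma$ --- should give $\S_\infty\ge\big(\tfrac{\sigma-p}{\sigma-m}\big)^{(\sigma-p)/\sigma}\S_{\R^N}$, hence $\S_\infty>\S_\Omega$ by \eqref{compactnesscondd}.

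To conclude, suppose $k\ge1$ and set $b:=\sum_{i\ge1}a_i=1-a_0>0$; using twice that $t\mapsto t^{p/\sigma}$ is concave and vanishes at $0$ (hence super-additive),
$$\S_\Omega=\Phi_\Omega(v_0)+\sum_{i\ge1}\Phi_\infty(v_i)\ \ge\ a_0^{p/\sigma}\S_\Omega+\Big(\sum_{i\ge1}a_i^{p/\sigma}\Big)\S_\infty\ \ge\ a_0^{p/\sigma}\S_\Omega+b^{p/\sigma}\S_\infty\ >\ (a_0^{p/\sigma}+b^{p/\sigma})\S_\Omega\ \ge\ \S_\Omega,$$
a contradiction; so $k=0$, $u_n\to v_0$ strongly in $W^{1,p}_0(\Omega)\cap D^{1,m}_0(\Omega)$, whence $\|v_0\|_{L^\sigma(\Omega)}=1$ and $\Phi_\Omega(v_0)=\lim_n\Phi_\Omega(u_n)=\S_\Omega$, i.e.\ $v_0$ realizes \eqref{mminbbpp}. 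I expect the hard part to be precisely the scaling estimate $\S_\infty\ge(\tfrac{\sigma-p}{\sigma-m})^{(\sigma-p)/\sigma}\S_{\R^N}$ linking the mixed-homogeneity limit functional to $\S_{\R^N}$, where the three exponents $p,m,\sigma$ interlock to produce the constant in \eqref{compactnesscondd}; promoting the minimizing sequence to a Palais-Smale sequence of the exact form demanded by Theorem~\ref{main}/\ref{main2} is comparatively routine.
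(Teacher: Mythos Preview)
Your argument is correct and takes a genuinely different route from the paper. The paper does not run a subadditivity argument at all: after promoting $(u_n)$ to a Palais--Smale sequence (exactly as you do), it simply invokes the ready-made compactness threshold of Corollary~\ref{compat}. With the choices $\mu=\sigma$, $\delta=(\sigma-p)/p$, $V_\infty=\nu=1$, $C_g=p\S_\Omega$ and $S_{p,\sigma}=(p\S_{\R^N})^{-\sigma/p}$ one computes $c^*=\tfrac{\sigma-p}{\sigma}\S_{\R^N}^{\sigma/(\sigma-p)}/\S_\Omega^{p/(\sigma-p)}$, while the Palais--Smale level satisfies $c\le\tfrac{\sigma-m}{\sigma}\S_\Omega$; combining these two bounds produces exactly condition~\eqref{compactnesscondd}. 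So in the paper the constant $(\tfrac{\sigma-p}{\sigma-m})^{(\sigma-p)/\sigma}$ comes out of the algebra of Corollary~\ref{compat}, not from any scaling.

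By contrast, you apply Theorem~\ref{main} directly and run a Lions-type comparison between $\S_\Omega$ and $\S_\infty$. This is cleaner, and in fact the step you flag as the ``hard part'' is trivial: from $L(\xi)/p\ge|\xi|^p$ and $M\ge 0$ one has, for any $w$ with $\|w\|_{L^\sigma}=1$,
\[
\Phi_\infty(w)\ \ge\ \int_{\R^N}|Dw|^p+\frac{|w|^p}{p}\ \ge\ \int_{\R^N}\frac{|Dw|^p}{p}+\frac{|w|^p}{p}\ \ge\ \S_{\R^N},
\]
so $\S_\infty\ge\S_{\R^N}$ directly, with no dilation needed. Your subadditivity chain then only requires $\S_\Omega<\S_\infty$, hence merely $\S_\Omega<\S_{\R^N}$, which is \emph{weaker} than~\eqref{compactnesscondd} whenever $p>m$ (the factor $(\tfrac{\sigma-p}{\sigma-m})^{(\sigma-p)/\sigma}$ being strictly below~$1$). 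In other words, your approach not only proves the corollary but sharpens it; the paper's route through Corollary~\ref{compat} is quicker to write but loses a constant.
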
	

\begin{remark}\rm
We point out that, some conditions guaranteeing the nonexistence of nontrivial solutions
in the star-shaped case $\Omega=\R^N$ can be provided. For the sake of simplicity, assume that $L$ is $p$-homogeneous and that $\xi\mapsto M(s,\xi)$
is $m$-homogeneous. Then, in view of \cite[Theorem 3]{pucciser}, that holds for $C^1$ solutions by virtue
of the results of \cite{degmusqua}, we have that \eqref{eq} admits no nontrivial $C^1$ solution
well behaved at infinity, namely satisfying condition (19) of \cite{pucciser}, provided
that there exists a number $a\in\R^+$ such that a.e.\ in $\R^N$ and for all $(s,\xi)\in\R\times\R^N$
\begin{align*}
(N-p(a+1))L(\xi) &+(N-m(a+1))M(s,\xi)+(asg(s)-NG(s)) \\
&+\frac{(N-ap) V(x)+x\cdot DV(x)}{p}|s|^p-a M_s(s,\xi)s\geq 0,
\end{align*}
holding, for instance, if there exists $0\leq a\leq \frac{N-p}{p}$ such that
$$
asg(s)-NG(s)\geq 0,\,\,\,\quad
(N-ap) V(x)+x\cdot DV(x)\geq 0,\,\,\,\quad
M_s(s,\xi)s\leq 0,
$$
for a.e. $x\in\R^N$ and for all $(s,\xi)\in\R\times\R^N$. Also, in the
more particular case where $g(s)=|s|^{\sigma-2}s$ and
$V(x)=V_\infty>0$, then the above conditions simply rephrase into
$$
\sigma\geq p^*,\qquad M_s(s,\xi)s\leq 0,
$$
for every $(s,\xi)\in\R\times\R^N$. In fact, in \eqref{ilsegnos}, we consider the
opposite assumption on $M_s$.
\end{remark}

\section{Some preliminary facts}
\label{prelimsection}
\noindent
It is a standard fact that, under condition \eqref{growth0} and \eqref{ggrow}, the functionals
$$
u\mapsto \int_\Omega L(Du),\quad\,\,
u\mapsto \int_\Omega V(x)|u|^p,\quad\,\,
u\mapsto \int_\Omega G(u)
$$
are $C^1$ on $W^{1,p}_0(\Omega)\cap D^{1,m}_0(\Omega)$.
Analogously, although $M$ depends explicitly on $s$, the functional
$$
\M:W^{1,p}_0(\Omega)\cap D^{1,m}_0(\Omega)\to\R,\quad \M(u)=\int_\Omega M(u,Du),
$$
admits, thanks to condition \eqref{range}, directional
derivatives along any $v\in W^{1,p}_0(\Omega)\cap D^{1,m}_0(\Omega)$ and
$$
\M'(u)(v)=\int_\Omega M_\xi(u,Du)\cdot Dv+\int_\Omega M_s(u,Du)v,
$$
as it can be easily verified observing that $p\leq \frac{p}{p-m} \leq p^*$
and that, for $u\in W^{1,p}_0(\Omega)\cap D^{1,m}_0(\Omega)$, by Young's inequality,
for some constant $C$ it holds
\begin{align*}
 |M_\xi(u,Du)\cdot Dv| &\leq C|Du|^m+C|Dv|^m\in L^1(\Omega),\\
 |M_s(u,Du)v| &\leq C|Du|^p+C|v|^{\frac{p}{p-m}}\in L^1(\Omega).
\end{align*}
Furthermore, if $u_k\to u$ in $W^{1,p}_0(\Omega)\cap D^{1,m}_0(\Omega)$
as $k\to\infty$ then $\M'(u_k)\to \M'(u)$ in the dual space $(W^{1,p}_0(\Omega)\cap D^{1,m}_0(\Omega))^*$, as $k\to\infty$. Indeed, for
$\|v\|_{W^{1,p}_0(\Omega)\cap D^{1,m}_0(\Omega)}\leq 1$, we have
\begin{align*}
 & |\M'(u_k)(v)-\M'(u)(v)|\\ &\leq \int_\Omega| M_\xi(u_k,Du_k)-M_\xi(u,Du)|| Dv|+\int_\Omega |M_s(u_k,Du_k)-M_s(u,Du)|\,|v|\\
 & \leq \|M_\xi(u_k,Du_k)-M_\xi(u,Du)\|_{L^{m'}}\|Dv\|_{L^{m}}+\|M_s(u_k,Du_k)-M_s(u,Du)\|_{L^{p/m}}\|v\|_{L^{p/(p-m)}} \\
\noalign{\vskip4pt}
& \leq \|M_\xi(u_k,Du_k)-M_\xi(u,Du)\|_{L^{m'}}+\|M_s(u_k,Du_k)-M_s(u,Du)\|_{L^{p/m}}.
\end{align*}
This yields the desired convergence, using \eqref{growths1} and the Dominated Convergence Theorem.
Notice that the same argument carried out before applies
either to integrals defined on $\Omega$ or on $\R^N.$ Hence the following proposition is proved.
\begin{proposition}
In the hypotheses of Theorems \ref{main} and \ref{main2}, the functionals $\phi$ and $\phi_\infty$ are $C^1.$
\end{proposition}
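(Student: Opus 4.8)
The plan is to verify that $\phi$ is a finite sum of $C^1$ functionals on $W^{1,p}_0(\Omega)\cap D^{1,m}_0(\Omega)$, namely the four terms $u\mapsto\into L(Du)$, $u\mapsto\M(u)=\into M(u,Du)$, $u\mapsto\into G(u)$ and $u\mapsto\frac1p\into V(x)|u|^p$; the very same argument applies word for word on $\R^N$ with $V$ replaced by $V_\infty$, which will settle the claim for $\phi_\infty$. For the three ``classical'' terms I would only invoke standard Nemytskii-operator theory. Under \eqref{growth0}, $u\mapsto\into L(Du)$ is $C^1$ on $W^{1,p}_0(\Omega)$ with derivative $v\mapsto\into L_\xi(Du)\cdot Dv$, the pairing being finite by H\"older since $|L_\xi(Du)|^{p'}\le C|Du|^p\in L^1(\Omega)$, and continuity of the derivative follows from the Dominated Convergence Theorem along an a.e.-convergent dominated subsequence, using that the limit is subsequence-independent. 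In the same way $u\mapsto\frac1p\into V(x)|u|^p$ is $C^1$, and so is $u\mapsto\into G(u)$, with derivative $v\mapsto\into g(u)v$, thanks to \eqref{ggrow} and the continuous embedding $W^{1,p}_0(\Omega)\hookrightarrow\elle{\sigma}$ (available since $p<\sigma<p^*$). Since $W^{1,p}_0(\Omega)\cap D^{1,m}_0(\Omega)$ embeds continuously into $W^{1,p}_0(\Omega)$, all three are a fortiori $C^1$ on the former space, and only $\M$ requires work.

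For $\M$ I would first establish Gateaux differentiability with the announced formula $\M'(u)(v)=\into M_\xi(u,Du)\cdot Dv+\into M_s(u,Du)v$. Fixing $u,v\in W^{1,p}_0(\Omega)\cap D^{1,m}_0(\Omega)$, for a.e.\ $x$ the scalar map $t\mapsto M(u(x)+tv(x),Du(x)+tDv(x))$ is $C^1$ on $(-1,1)$: in the setting of Theorem~\ref{main} because $M\in C^2$, and in the sub-quadratic regime of Theorem~\ref{main2} because the hypotheses $M_\xi\in C^1(\R\times(\R^N\setminus\{0\}))$, $M_{s\xi},M_{ss}\in C^0(\R\times\R^N)$ combined with $|M_\xi(s,\xi)|\le C|\xi|^{m-1}$ still give $M\in C^1(\R\times\R^N)$. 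To differentiate under the integral sign I need a majorant in $L^1(\Omega)$, uniform for $t\in(-1,1)$, of $|M_s(u+tv,Du+tDv)\,v|+|M_\xi(u+tv,Du+tDv)\cdot Dv|$; by \eqref{growths1} this is controlled by $C(|Du|+|Dv|)^m|v|+C(|Du|+|Dv|)^{m-1}|Dv|$. The second summand lies in $L^1(\Omega)$ by Young's inequality, since $Du,Dv\in L^m(\Omega)$; the first is handled by H\"older with conjugate exponents $p/m$ and $p/(p-m)$, using $|Du|^m,|Dv|^m\in L^{p/m}(\Omega)$ (because $Du,Dv\in L^p(\Omega)$) together with $v\in L^{p/(p-m)}(\Omega)$. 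This last membership is exactly where \eqref{range} enters: $p\le p/(p-m)\le p^*$ amounts to $p-1\le m\le p-1+p/N$, so that $W^{1,p}_0(\Omega)\hookrightarrow L^{p/(p-m)}(\Omega)$. The same H\"older bounds show that $v\mapsto\M'(u)(v)$ is linear and continuous, i.e.\ $\M'(u)\in(W^{1,p}_0(\Omega)\cap D^{1,m}_0(\Omega))^*$.

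It then remains to prove that $u\mapsto\M'(u)$ is continuous, for a Gateaux-differentiable functional whose Gateaux differential is continuous into the dual is of class $C^1$. If $u_k\to u$ in $W^{1,p}_0(\Omega)\cap D^{1,m}_0(\Omega)$, the chain of estimates displayed just before the statement gives, for $\|v\|_{W^{1,p}(\Omega)\cap D^{1,m}(\Omega)}\le 1$, the bound $|\M'(u_k)(v)-\M'(u)(v)|\le\|M_\xi(u_k,Du_k)-M_\xi(u,Du)\|_{L^{m'}}+\|M_s(u_k,Du_k)-M_s(u,Du)\|_{L^{p/m}}$. Passing to a subsequence along which $u_k\to u$ and $Du_k\to Du$ a.e.\ and $|Du_k|\le g$ a.e.\ with $g\in L^p(\Omega)\cap L^m(\Omega)$, the continuity of $M_\xi,M_s$ and the growth bounds $|M_\xi(u_k,Du_k)|^{m'}\le C|Du_k|^m\le Cg^m$ and $|M_s(u_k,Du_k)|^{p/m}\le C|Du_k|^p\le Cg^p$ let the Dominated Convergence Theorem send both norms to $0$; as the limit is subsequence-independent, the whole sequence converges and $\M'$ is continuous. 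Hence $\phi$, and by the identical argument on $\R^N$ also $\phi_\infty$, are $C^1$. I expect the only delicate point to be the exponent bookkeeping of the middle paragraph — identifying the precise range of $m$ for which $v\mapsto M_s(u,Du)v$ is integrable — which is exactly why assumption \eqref{range} is imposed; everything else is routine Nemytskii-operator and dominated-convergence machinery.
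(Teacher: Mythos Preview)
Your proposal is correct and follows essentially the same approach as the paper: treat the three ``classical'' terms as standard, and for $\M$ establish Gateaux differentiability via the integrability bounds $|M_\xi(u,Du)\cdot Dv|\le C|Du|^m+C|Dv|^m$ and $|M_s(u,Du)v|\le C|Du|^p+C|v|^{p/(p-m)}$ (using $p\le p/(p-m)\le p^*$ from \eqref{range}), then prove continuity of $\M'$ via the same H\"older splitting $\|M_\xi(u_k,Du_k)-M_\xi(u,Du)\|_{L^{m'}}+\|M_s(u_k,Du_k)-M_s(u,Du)\|_{L^{p/m}}$ and Dominated Convergence. Your treatment is in fact slightly more detailed than the paper's (you spell out the sub-quadratic regularity and the subsequence/dominant argument), but the strategy is identical.
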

\noindent
In addition to the assumptions on $L,M$ and $g,G$ set in the introduction,
assume now that there exist positive numbers $\delta>0$ and $\mu>p$ such that
\begin{equation}
	\label{bounddc}
\mu M(s,\xi)-M_s(s,\xi)s-M_\xi(s,\xi)\cdot\xi\geq \delta |\xi|^m,\quad
\mu L(\xi)-L_\xi(\xi)\cdot\xi\geq\delta |\xi|^p,\quad
sg(s)-\mu G(s)\geq 0,
\end{equation}
for any $s\in\R$ and all $\xi\in\R^N$.
This hypothesis is rather well established
in the framework of quasi-linear problems (cf.\ \cite{squbook})
and it allows an arbitrary Palais-Smale sequence $(u_n)$ to be bounded
in $W^{1,p}_0(\Omega)\cap D^{1,m}_0(\Omega)$, as shown in the following

\begin{proposition}\label{bddd}
	Let $j$ be as in \eqref{jdef} and assume that \eqref{range} holds.
	Let $(u_n)\subset W^{1,p}_0(\Omega)\cap D^{1,m}_0(\Omega)$ be a sequence such that
	$$
	\phi(u_n)\to c \quad \quad \phi'(u_n)\to 0 \quad \text{in $(W^{1,p}_0(\Omega)\cap D^{1,m}_0(\Omega))^*$}
	$$
	Then, if condition~\eqref{bounddc} holds, $(u_n)$ is bounded in $W^{1,p}_0(\Omega)\cap D^{1,m}_0(\Omega)$.
\end{proposition}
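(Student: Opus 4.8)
The plan is to follow the classical Ambrosetti--Rabinowitz scheme, but exploiting the double-homogeneity structure carefully so that the coercivity coming from condition \eqref{bounddc} controls simultaneously the $W^{1,p}$ and the $D^{1,m}$ parts of the norm. First I would test $\phi'(u_n)$ against $u_n$ itself. Since $(u_n)$ is a Palais--Smale sequence, we have $\phi(u_n)\to c$ and $\phi'(u_n)(u_n)=o(1)\,\|u_n\|_{W^{1,p}(\Omega)\cap D^{1,m}(\Omega)}$, where, recalling the expression for $\M'$ recorded just before Proposition~\ref{bddd},
$$
\phi'(u_n)(u_n)=\into L_\xi(Du_n)\cdot Du_n+\into\big(M_\xi(u_n,Du_n)\cdot Du_n+M_s(u_n,Du_n)u_n\big)+\into V(x)|u_n|^p-\into g(u_n)u_n .
$$
Then I would form the combination $\mu\,\phi(u_n)-\phi'(u_n)(u_n)$, which on the left-hand side is $\mu c+o(1)+o(1)\|u_n\|_{W^{1,p}(\Omega)\cap D^{1,m}(\Omega)}$, and on the right-hand side, after grouping terms, becomes
$$
\into\big(\mu L(Du_n)-L_\xi(Du_n)\cdot Du_n\big)+\into\big(\mu M(u_n,Du_n)-M_s(u_n,Du_n)u_n-M_\xi(u_n,Du_n)\cdot Du_n\big)
$$
$$
+\Big(\frac{\mu}{p}-1\Big)\into V(x)|u_n|^p+\into\big(g(u_n)u_n-\mu G(u_n)\big).
$$

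Now I would apply \eqref{bounddc} term by term: the first integral is $\geq\delta\|Du_n\|_{L^p(\Omega)}^p$, the second is $\geq\delta\|Du_n\|_{L^m(\Omega)}^m=\delta\|u_n\|_m^m$, the third is $\geq(\mu/p-1)V_0\into|u_n|^p$ by the hypothesis $\inf_\Omega V=V_0>0$, and the last integrand is nonnegative. Adding $\delta\|Du_n\|_{L^p(\Omega)}^p$ to $(\mu/p-1)V_0\into|u_n|^p$ and using that $\mu>p$ gives a lower bound of the form $c_1\|u_n\|_p^p$ for some $c_1>0$ (after absorbing constants and using equivalence of $\|Du_n\|_{L^p}+\|u_n\|_{L^p}$ with $\|u_n\|_{W^{1,p}}$ on $W^{1,p}_0(\Omega)$; here $V_0>0$ is what makes this a genuine norm control rather than just a seminorm one). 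Hence
$$
c_1\|u_n\|_p^p+\delta\|u_n\|_m^m\leq \mu c+o(1)+o(1)\,\|u_n\|_{W^{1,p}(\Omega)\cap D^{1,m}(\Omega)}.
$$

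The main (and essentially only) obstacle is the interplay of the two different powers $p$ and $m$ on the left versus the single power $1$ in the norm on the right. I would handle this by a standard Young-type argument: writing $\|u_n\|_{W^{1,p}(\Omega)\cap D^{1,m}(\Omega)}=\|u_n\|_p+\|u_n\|_m$ and using $p,m>1$, the linear term $o(1)(\|u_n\|_p+\|u_n\|_m)$ on the right can be absorbed into $c_1\|u_n\|_p^p+\delta\|u_n\|_m^m$ on the left for $n$ large (since $t\mapsto t$ is dominated by $t\mapsto t^q$ as $t\to\infty$ for any $q>1$, i.e.\ $\eps t\leq \eta t^q+C_{\eps,\eta}$). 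This yields a uniform bound on both $\|u_n\|_p^p$ and $\|u_n\|_m^m$, hence on $\|u_n\|_{W^{1,p}(\Omega)\cap D^{1,m}(\Omega)}$, completing the proof. A minor point worth noting in the write-up is that all the integrals appearing are finite and the pairing $\phi'(u_n)(u_n)$ is legitimate because $u_n$ itself is an admissible test function, with integrability of $M_\xi(u_n,Du_n)\cdot Du_n$ and $M_s(u_n,Du_n)u_n$ guaranteed exactly as in the computation preceding Proposition~\ref{bddd}, using \eqref{growths1} and $p\leq p/(p-m)\leq p^*$ from \eqref{range}.
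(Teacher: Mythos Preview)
Your proposal is correct and follows essentially the same approach as the paper: form the Ambrosetti--Rabinowitz combination $\mu\phi(u_n)-\phi'(u_n)(u_n)$, apply \eqref{bounddc} term by term together with $V\geq V_0>0$ to extract $\delta\|Du_n\|_{L^p}^p+\delta\|Du_n\|_{L^m}^m+c\|u_n\|_{L^p}^p$ on the left, and then absorb the linear $o(1)\|u_n\|$ term using $p,m>1$. The paper presents the same computation a bit more tersely, but there is no substantive difference.
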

\begin{proof}
	Let $(w_n)\subset (W^{1,p}_0(\Omega)\cap D^{1,m}_0(\Omega))^*$ with $w_n\to 0$ as $n\to\infty$ be
	such that $\phi'(u_n)(v)=\langle w_n,v\rangle$,
	for every $v\in W^{1,p}_0(\Omega)\cap D^{1,m}_0(\Omega)$. Whence, by choosing $v=u_n$, it follows
	\begin{equation*}
		\int_\Omega j_\xi(u_n,Du_n)\cdot Du_n+\int_\Omega j_s(u_n,Du_n)u_n+\int_\Omega V(x)|u_n|^p=\langle w_n,u_n\rangle.
	\end{equation*}
	Combining this equation with $\mu \phi(u_n)=\mu c+o(1)$ as $n\to\infty$, namely
	\begin{equation*}
		\int_\Omega \mu j(u_n,Du_n)+\frac{\mu}{p}\int_\Omega V(x)|u_n|^p=\mu c+o(1),
	\end{equation*}
	recalling the definition of $j$, and using condition~\eqref{bounddc}, yields
	$$
	\frac{\mu-p}{p}\int_\Omega V(x)|u_n|^p+\delta \int_\Omega |Du_n|^p+\delta \int_\Omega |Du_n|^m\leq \mu c+\|w_n\|\|u_n\|_{W^{1,p}_0(\Omega)\cap D^{1,m}_0(\Omega)}+o(1),
	$$
	as $n\to\infty$, implying, due to $V\geq V_0$ that
	$$
	\|u_n\|_{W^{1,p}(\Omega)}^p+\|u_n\|_{D^{1,m}(\Omega)}^m\leq C+C\|u_n\|_{W^{1,p}(\Omega)}+C\|u_n\|_{D^{1,m}(\Omega)}+o(1),
	$$
	as $n\to\infty$. The assertion then follows immediately.
\end{proof}

\noindent
From now on we shall always assume to handle {\em bounded} Palais-Smale sequences,
keeping in mind that condition \eqref{bounddc} can guarantee
the boundedness of such sequences.

\begin{proposition}
	\label{convergenze}
	Let $j$ be as in \eqref{jdef} and assume that $1<m<p<N$ and $p<\sigma<p^*$.
	Let $(u_n)\subset W^{1,p}_0(\Omega)\cap D^{1,m}_0(\Omega)$ bounded be such that
	$$
	\phi(u_n)\to c \quad \quad \phi'(u_n)\to 0 \quad \text{in $(W^{1,p}_0(\Omega)\cap D^{1,m}_0(\Omega))^*$}.
	$$
	Then, up to a subsequence, $(u_n)$ converges weakly to some $u$ in $W^{1,p}_0(\Omega)\cap D^{1,m}_0(\Omega)$, $u_n(x)\to u(x)$
	and $Du_n(x)\to Du(x)$ for a.e.\ $x\in\Omega$.
\end{proposition}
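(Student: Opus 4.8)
The plan is to argue as in the Boccardo--Murat approach to almost everywhere convergence of the gradients, adapted to the two homogeneities $p,m$ carried by $L$ and $M$ and to the presence of the lower order term $M_s(u,Du)$, which has natural $m$-growth in the gradient.

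\emph{Weak and pointwise convergence of $u_n$.} Since $(u_n)$ is bounded in the reflexive space $W^{1,p}_0(\Omega)\cap D^{1,m}_0(\Omega)$, up to a subsequence $u_n\rightharpoonup u$; extending $u_n$ by zero outside $\Omega$ and applying the Rellich--Kondrachov theorem on balls together with a diagonal extraction, we may further assume $u_n\to u$ in $L^q_{\rm loc}(\Omega)$ for each $q<p^*$ and $u_n(x)\to u(x)$ for a.e.\ $x\in\Omega$. In particular $Du_n\rightharpoonup Du$ weakly in $L^p(\Omega)$ and in $L^m(\Omega)$; moreover, by the growth conditions, $L_\xi(Du_n)$, $M_\xi(u_n,Du_n)$, $V|u_n|^{p-2}u_n$, $g(u_n)$ are bounded in $L^{p'}$, $L^{m'}$, $L^{p'}$ and (locally) in $L^{\sigma'}$ respectively, while $M_s(u_n,Du_n)$ is bounded in $L^1(\Omega)$ thanks to $|M_s(s,\xi)|\le C|\xi|^m$.

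\emph{Localized monotonicity estimate.} Fix $\varphi\in{\mathcal D}(\Omega)$ with $0\le\varphi\le1$ and, for $\delta>0$, set $T_\delta(s)=\max\{-\delta,\min\{\delta,s\}\}$ and $v_n:=\varphi\,T_\delta(u_n-u)$; one checks at once that $(v_n)$ is bounded in $W^{1,p}_0(\Omega)\cap D^{1,m}_0(\Omega)$, so $\phi'(u_n)(v_n)\to0$. Expanding $\phi'(u_n)(v_n)$ and using the bounds above: the terms in which the gradient falls on $\varphi$, as well as $\int_\Omega V|u_n|^{p-2}u_nv_n$ and $\int_\Omega g(u_n)v_n$, tend to $0$, since $T_\delta(u_n-u)\to0$ strongly in the relevant Lebesgue spaces over ${\rm supp}\,\varphi$ by dominated convergence; the term $\int_\Omega M_s(u_n,Du_n)v_n$ is bounded in absolute value by $C\delta\int_{{\rm supp}\,\varphi}|Du_n|^m\le C\delta$; and replacing $L_\xi(Du_n)$ by $L_\xi(Du_n)-L_\xi(Du)$ and $M_\xi(u_n,Du_n)$ by $M_\xi(u_n,Du_n)-M_\xi(u_n,Du)$ in the principal terms is legitimate, because $\varphi\,L_\xi(Du)\,\chi_{\{|u_n-u|<\delta\}}\to\varphi\,L_\xi(Du)$ in $L^{p'}$ and $\varphi\,M_\xi(u_n,Du)\,\chi_{\{|u_n-u|<\delta\}}\to\varphi\,M_\xi(u,Du)$ in $L^{m'}$ (continuity of $M_\xi$, $u_n\to u$ a.e., $\chi_{\{|u_n-u|<\delta\}}\to1$ a.e.) while $DT_\delta(u_n-u)=(Du_n-Du)\chi_{\{|u_n-u|<\delta\}}\rightharpoonup0$. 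Altogether,
\[
\limsup_n\int_\Omega\varphi\Big[\big(L_\xi(Du_n)-L_\xi(Du)\big)+\big(M_\xi(u_n,Du_n)-M_\xi(u_n,Du)\big)\Big]\cdot DT_\delta(u_n-u)\le C\delta ,
\]
and, since strict convexity of $L$ and of $\xi\mapsto M(s,\xi)$ makes both summands nonnegative, each one separately has $\limsup_n\le C\delta$.

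\emph{Almost everywhere convergence of the gradients, and the main obstacle.} Let $\Theta_n:=(L_\xi(Du_n)-L_\xi(Du))\cdot(Du_n-Du)\ge0$ and fix a compact $K\subset\Omega$, applying the previous step with $\varphi\equiv1$ on $K$, so that $\limsup_n\int_{K\cap\{|u_n-u|<\delta\}}\Theta_n\le C\delta$. For $r\in(0,1)$, splitting $K$ according to whether $|u_n-u|<\delta$ or not, using Hölder's inequality, the bound $0\le\Theta_n\le C(|Du_n|^p+|Du|^p)$ (uniformly in $L^1$), and $|K\cap\{|u_n-u|\ge\delta\}|\to0$, one obtains $\limsup_n\int_K\Theta_n^r\le C\delta^r$ for every $\delta>0$, whence $\Theta_n^r\to0$ in $L^1(K)$ and $\Theta_n\to0$ in measure on $K$; along a further subsequence $\Theta_n\to0$ a.e.\ on $K$. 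At a.e.\ such $x$: if $Du_n(x)$ stayed, along a subsequence, in a bounded set, one extracts a limit $\bar\eta$ with $(L_\xi(\bar\eta)-L_\xi(Du(x)))\cdot(\bar\eta-Du(x))=0$, forcing $\bar\eta=Du(x)$ by strict convexity; and $|Du_n(x)|\to\infty$ is ruled out by the coercivity estimate $(L_\xi(\eta)-L_\xi(\zeta))\cdot(\eta-\zeta)\ge\nu|\eta|^p-L(0)-C|\eta|^{p-1}|\zeta|-C|\zeta|^{p-1}|\eta|$, which follows from \eqref{growth0} and convexity. Hence $Du_n(x)\to Du(x)$ a.e.\ on $K$, and a diagonal argument over a compact exhaustion of $\Omega$ yields the claim. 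The delicate point is precisely the term $M_s(u_n,Du_n)$: carrying the natural $m$-growth $|Du_n|^m$ in the gradient, it is only bounded in $L^1(\Omega)$, not equi-integrable, and a priori could destroy the monotonicity estimate — it is the truncation at level $\delta$ that keeps its contribution of size $O(\delta)$, disappearing as $\delta\to0$, while the two distinct homogeneities $p$ and $m$ are what force the careful two-scale bookkeeping of the second step.
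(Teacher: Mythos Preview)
Your argument is correct. The paper's proof is considerably shorter because it does not redo the truncation argument: it simply observes that on each bounded $\omega\Subset\Omega$ the equation can be written as
\[
\int_\omega \big(L_\xi(Du_n)+M_\xi(u_n,Du_n)\big)\cdot Dv=\langle w_n,v\rangle+\langle f_n,v\rangle+\int_\omega v\,d\mu_n,
\]
with $w_n\to0$ in $W^{-1,p'}(\omega)$, $f_n:=-V|u_n|^{p-2}u_n+g(u_n)$ precompact in $W^{-1,p'}(\omega)$, and $\mu_n:=-M_s(u_n,Du_n)$ bounded in $L^1(\omega)$ (hence weakly-$*$ precompact in measures). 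This is exactly the setting of Dal~Maso--Murat \cite{dalmur}, Theorem~1, which is invoked as a black box to conclude $Du_n\to Du$ a.e.\ on $\omega$, and a diagonal extraction finishes. What you have written is, in effect, the proof of that theorem specialized to the present operators: the test function $\varphi\,T_\delta(u_n-u)$, the $O(\delta)$ control of the $L^1$ term $M_s(u_n,Du_n)$, and the passage $\Theta_n^r\to0$ in $L^1_{\rm loc}$ are precisely the Boccardo--Murat/Dal~Maso--Murat machinery. Your version has the advantage of being self-contained and of making explicit that the only dangerous term is the merely $L^1$-bounded $M_s(u_n,Du_n)$; the paper's version is cleaner but relies on the reader knowing \cite{dalmur}.
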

\begin{proof}
	It is sufficient to justify that $Du_n(x)\to Du(x)$ for a.e.\ $x\in\Omega$. Given an arbitrary bounded subdomain
	$\omega\subset\overline{\omega}\subset\Omega$ of $\Omega$, from the fact that $\phi'(u_n)\to 0$
	in $(W^{1,p}_0(\Omega)\cap D^{1,m}_0(\Omega))^*$, we can write
	$$
	\int_\omega a(u_n,Du_n)\cdot Dv=\langle w_n,v\rangle+\langle f_n,v\rangle+
	\int_\omega v\, d\mu_n,\quad\text{for all $v\in {\mathcal D}(\omega)$},
	$$
	where $(w_n)\subset (W^{1,p}_0(\Omega)\cap D^{1,m}_0(\Omega))^*$ is vanishing, and hence in
	particular $w_n\in W^{-1,p'}(\omega)$, with $w_n\to 0$ in $W^{-1,p'}(\omega)$ as $n\to\infty$ and we have set
	\begin{align*}
		 a_n(x,s,\xi)&:=L_\xi(\xi)+M_\xi(s,\xi),\qquad\text{for all $(s,\xi)\in\R\times\R^N$}, \\
		 f_n&:=-V(x)|u_n|^{p-2}u_n+g(u_n)\in W^{-1,p'}(\omega),\qquad n\in\N,  \\
		\mu_n&:=-M_s(u_n,Du_n)\in L^1(\omega),\qquad n\in\N.
	\end{align*}
	Due to the strict convexity assumptions on the maps $\xi\mapsto L(\xi)$ and $\xi\mapsto M(s,\xi)$
	and the growth conditions on $L_\xi,M_\xi, M_s$ and $g$, all the assumptions of \cite[Theorem 1]{dalmur} are fulfilled. Precisely,
	\begin{equation*}
	|a_n(x,s,\xi)|\leq |L_\xi(\xi)|+|M_\xi(s,\xi)|\leq C|\xi|^{p-1}+C|\xi|^{m-1}\leq C+C|\xi|^{p-1},
\end{equation*}
for a.e.\ $x\in\omega$ and all $(s,\xi)\in\R\times\R^N$, and
\begin{align*}
& f_n\to f,\quad f:=-V(x)|u|^{p-2}u+g(u),\quad \text{strongly in $W^{-1,p'}(\omega)$}, \\
& \mu_n\rightharpoonup \mu,\quad \text{weakly* in ${\mathcal M}(\omega)$,\,\,\, since\,\, $\sup_{n\in\N}\|M_s(u_n,Du_n)\|_{L^1(\omega)}<+\infty$.}
\end{align*}
	 Then, it follows that
	$Du_n(x)\to Du(x)$ for a.e.\ $x\in \omega$. Finally, a simple Cantor diagonal argument allows to recover the convergence
	over the whole domain $\Omega$.
\end{proof}

\noindent
Next we prove a regularity result for the solutions of equation \eqref{eq}.
\begin{proposition}
	\label{regularityres}
	Let $j$ be as in \eqref{jdef} and assume \eqref{range} and \eqref{ilsegnos}.
	Let $u\in W^{1,p}_0(\Omega)\cap D^{1,m}_0(\Omega)$ be a solution of \eqref{eq}. Then
	$$
	u\in \bigcap_{q\geq p} L^q(\Omega),
	\quad
	\text{$u\in L^\infty(\Omega)$ and $\lim_{|x|\to\infty} u(x)=0$}.
	$$
\end{proposition}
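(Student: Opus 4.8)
The plan is to run a Moser-type iteration to bootstrap integrability, and then use a uniform local $L^\infty$ estimate together with a vanishing-at-infinity argument. The key structural fact is the sign condition \eqref{ilsegnos}: testing \eqref{eq} against $u|u|^{q}$-type functions, the term $\int_\Omega M_s(u,Du)\,u|u|^q$ has a favorable sign (it is nonnegative since $M_s(s,\xi)s\geq 0$ and $|u|^q\geq 0$), so it can be discarded, while the term $-\dvg(M_\xi(u,Du))$ contributes, after integration by parts, a nonnegative quantity of the form $\int_\Omega M_{\xi}(u,Du)\cdot D(u|u|^q)$, which by the growth bound $|M_\xi(s,\xi)|\le C|\xi|^{m-1}$ and convexity is controlled (and again has the right sign up to constants). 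Thus the principal part effectively behaves like the $p$-Laplacian plus a lower-order quasilinear perturbation that does not spoil the iteration, since $m<p$.

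First I would fix the test function. For $q\ge 0$ and $K>0$ set $u_K:=\min\{|u|,K\}$ and test \eqref{eq} with $v:=u\,u_K^{qp}$ (truncated so $v\in W^{1,p}_0(\Omega)\cap D^{1,m}_0(\Omega)$, then let $K\to\infty$ by monotone/dominated convergence). Using \eqref{growth0}, the ellipticity $L_\xi(\xi)\cdot\xi\ge \nu'|\xi|^p$ (which follows from $p$-growth and convexity, with $L(0)=0$), the sign of the $M_s$ term, and absorbing the $M_\xi$ contribution via Young's inequality and $m<p$, one obtains
\begin{equation*}
\int_\Omega |D(|u|^{(q+1)}\!)|^p \le C(q+1)^p\int_\Omega V(x)|u|^{p(q+1)} + C(q+1)^p\int_\Omega |g(u)|\,|u|^{p q +1}.
\end{equation*}
Since $|g(s)|\le C|s|^{\sigma-1}$ with $\sigma<p^*$, the right-hand side is estimated by splitting $|u|^{\sigma-p}$ over the region where it is large (small measure, by $u\in L^{p^*}$) and where it is bounded; feeding the Sobolev inequality $\|D w\|_p\gtrsim\|w\|_{p^*}$ with $w=|u|^{q+1}$ into the left side yields the iteration inequality $\|u\|_{L^{p^*(q+1)}}\le (C(q+1))^{1/(q+1)}\|u\|_{L^{p(q+1)}}$ once one knows $u\in L^{p^*}$ to start. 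Iterating $q_0=0$, $q_{j+1}+1=\frac{p^*}{p}(q_j+1)$ gives $u\in L^q(\Omega)$ for every $q\ge p$, and the usual product estimate $\prod_j (C(q_j+1))^{1/(q_j+1)}<\infty$ (geometric growth of $q_j$) delivers $u\in L^\infty(\Omega)$.

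For the decay, I would localize: on $\Omega\setminus B_R$ the function $u$ solves an equation whose zeroth-order coefficient $V(x)$ is bounded, and $u$ is already in $L^\infty\cap L^q$ for all $q$; a De Giorgi–Nash–Moser local sup estimate (e.g.\ \cite{dalmur}-type structure, or classical results for equations with $p$-growth principal part and lower-order terms controlled as above) gives
\begin{equation*}
\sup_{B_1(x_0)}|u| \le C\Big(\int_{B_2(x_0)}|u|^{p^*}\Big)^{1/p^*},\qquad |x_0| \text{ large},
\end{equation*}
with $C$ independent of $x_0$ (the coefficients are uniformly bounded at infinity); since $u\in L^{p^*}(\Omega)$, the right-hand side tends to $0$ as $|x_0|\to\infty$, hence $\lim_{|x|\to\infty}u(x)=0$.

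The main obstacle is justifying the Moser iteration rigorously in the presence of the $u$-dependent quasilinear term $-\dvg(M_\xi(u,Du))+M_s(u,Du)$: one must verify the truncated test functions are admissible in $W^{1,p}_0\cap D^{1,m}_0$, that the term $\int M_\xi(u,Du)\cdot Dv$ makes sense and is absorbable (this is where $m<p$ and the growth bounds \eqref{growths1} are essential, together with $u\in L^\infty$ eventually making all terms integrable), and that the sign term $\int M_s(u,Du)\,u\,u_K^{qp}\ge0$ really can be dropped rather than needing control — which is exactly what \eqref{ilsegnos} provides. Once the structure conditions are checked, the iteration and the decay estimate are standard.
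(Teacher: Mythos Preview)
Your plan is the paper's: Moser iteration with truncated powers of $u$ as test functions, discarding the $M_s$ contribution via \eqref{ilsegnos}, then bootstrapping (the paper reaches the key inequality and then simply invokes \cite[Lemma~2]{Yu}, which also delivers the decay $u(x)\to 0$ at infinity). The one place where you are less sharp is the handling of the $M_\xi$ term. You propose to ``absorb the $M_\xi$ contribution via Young's inequality and $m<p$'', but on the unbounded domain $\Omega$ this produces a leftover term of the type $C_\eps\int_\Omega u_K^{qp}$, whose finiteness is not guaranteed when $qp<p$. The paper avoids this entirely by observing that $Dv_{k,i}=i\,u_k^{\,i-1}Du\,\chi_{\{0<u<k\}}$ is a nonnegative scalar multiple of $Du$, and that convexity of $\xi\mapsto M(s,\xi)$ together with $M(s,0)=0$ (from \eqref{growths1}) gives $M_\xi(s,\xi)\cdot\xi\ge M(s,\xi)-M(s,0)\ge 0$; hence $\int_\Omega M_\xi(u,Du)\cdot Dv_{k,i}\ge 0$ and this term, like the $M_s$ term, is simply dropped. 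One lands directly on the clean $p$-Laplacian inequality
\[
\nu i\int_\Omega u_k^{\,i-1}|Du_k|^p\le C\int_\Omega (u^+)^{\sigma-1+i},
\]
with no absorption needed. (Incidentally, in your displayed inequality the $V$-term belongs on the left-hand side: since $V\ge V_0>0$ it helps and should be kept there or discarded, not moved to the right with a positive sign.)
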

\begin{proof}
	Let $k,i\in\N$. Then, setting $v_{k,i}(x):=(u_k(x))^i$ with $u_k(x):=\min\{u^+(x),k\}$,
	it follows that $v_{k,i}\in W^{1,p}_0(\Omega)\cap D^{1,m}_0(\Omega)$
	can be used as a test function in \eqref{eq}, yielding
	\begin{align*}
		\int_\Omega L_\xi(Du)\cdot Dv_{k,i} &+\int_\Omega M_\xi(u,Du)\cdot Dv_{k,i}  \\
			& +\int_\Omega M_s(u,Du)v_{k,i}+\int_\Omega V(x)|u|^{p-2}u v_{k,i}=\int_\Omega g(u)v_{k,i}.
	\end{align*}
Taking into account that $Dv_{k,i}$ is equal to $i u^{i-1}Du\chi_{\{0<u<k\}}$, by convexity
and positivity of the map $\xi\mapsto M(s,\xi)$ we deduce that
$M_\xi(u,Du)\cdot Dv_{k,i}\geq 0$. Moreover, by the sign condition \eqref{ilsegnos}
it follows $M_s(u,Du)v_{k,i}\geq 0$ a.e.\ in $\Omega$. Then, we reach
\begin{equation*}
	\int_\Omega i(u_k)^{i-1} L_\xi(Du_k)\cdot Du_k   +\int_\Omega V(x)|u|^{p-2}u (u_k(x))^i\leq \int_\Omega g(u)(u_k(x))^i,
\end{equation*}
yielding in turn, by \eqref{ggrow}, that for all $k,i\geq 1$
\begin{equation}
	\label{disLiu1}
	\nu i\int_\Omega (u_k)^{i-1} |Du_k|^p\leq C\int_\Omega (u^+(x))^{\sigma-1+i}.
\end{equation}
If $\hat u_k:=\min\{u^-(x),k\}$, a similar inequality
\begin{equation}
	\label{disLiu2}
	\nu i\int_\Omega (\hat u_k)^{i-1} |D\hat u_k|^p\leq C\int_\Omega (u^-(x))^{\sigma-1+i},
\end{equation}
can be obtained by using $\hat v_{k,i}:=-(\hat u_k)^i$ as a test function in \eqref{eq},
observing that by \eqref{ilsegnos},
\begin{align*}
 M_s(u,Du)\hat v_{k,i}& =-M_s(u,Du)\chi_{\{-k<u<0\}}(-u)^i\geq 0,    \\
 M_\xi(u,Du)\cdot Dv_{k,i}& =i(-u)^{i-1} \chi_{\{-k<u<0\}} M_\xi(u,Du)\cdot Du \geq 0.
\end{align*}
Once \eqref{disLiu1}-\eqref{disLiu2} are reached, the assertion follows exactly as
in \cite[Lemma 2, (a) and (b)]{Yu}.
\end{proof}	

We now recall the following version of \cite[Lemma 4.2]{degiovannilancellotti} which turns out to be a
rather useful tool in order to establish convergences in our setting. Roughly speaking, one needs some kind
of sub-criticality in the growth conditions.

\begin{lemma}\label{lemmino}
Let $\Omega\subset \R^N$ and $h: \Omega \times \R\times \R^N$ be a Carath\'eodory function, $p,m>1$, $\mu\geq 1$,
$p\leq \sigma\leq p^*$ and assume that, for every $\varepsilon>0$ there exist $ a_\varepsilon \in L^\mu(\Omega)$ such that
\begin{equation}
	\label {growtha}
|h(x,s,\xi)| \leq a_\varepsilon (x)+\varepsilon |s|^{\sigma/\mu} +\varepsilon |\xi|^{p/\mu}+\varepsilon |\xi|^{m/\mu},
\end{equation}
a.e.\ in $\Omega$ and for all $(s,\xi)\in\R\times\R^N$.
Assume that $u_n\to u$ a.e.\ in $\Omega$, $Du_n\to Du$
a.e.\ in $\Omega$ and
$$
\text{ $(u_n)$ is bounded in $W^{1,p}_0(\Omega)$,\quad
$(u_n)$ is bounded in $D^{1,m}_0(\Omega)$}.
$$
Then $h(x,u_n,Du_n)$ converges to $h(x,u,Du)$ in $L^\mu(\Omega)$.
\end{lemma}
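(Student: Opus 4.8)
The plan is to deduce the $L^\mu(\Omega)$-convergence from the pointwise a.e.\ convergence of $h(\cdot,u_n,Du_n)$ by means of the Vitali convergence theorem, which, on a possibly unbounded $\Omega$, requires in addition to the pointwise convergence both the uniform integrability and the tightness at infinity of the relevant family. This is essentially the route of \cite[Lemma 4.2]{degiovannilancellotti}, adapted to the present double-Sobolev setting.

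First I would note that, $h$ being a Carath\'eodory function and $u_n\to u$, $Du_n\to Du$ a.e.\ in $\Omega$, one has $h(x,u_n,Du_n)\to h(x,u,Du)$ for a.e.\ $x\in\Omega$. Moreover, applying \eqref{growtha} with $\varepsilon=1$ and using $u\in W^{1,p}_0(\Omega)\cap D^{1,m}_0(\Omega)$, one checks that $h(\cdot,u,Du)\in L^\mu(\Omega)$: indeed $|u|^{\sigma/\mu}\in L^\mu(\Omega)$, because $W^{1,p}_0(\Omega)\hookrightarrow L^\sigma(\Omega)$ for $p\leq\sigma\leq p^*$ (Sobolev inequality for the zero extension to $\R^N$, interpolated with $L^p(\Omega)$), while $|Du|^{p/\mu},|Du|^{m/\mu}\in L^\mu(\Omega)$ and $a_1\in L^\mu(\Omega)$.

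The core step is the equi-integrability estimate. By \eqref{growtha} and the elementary inequality $(t_1+t_2+t_3+t_4)^\mu\leq C_\mu(t_1^\mu+t_2^\mu+t_3^\mu+t_4^\mu)$, valid for $\mu\geq 1$, for every measurable $E\subset\Omega$ and every $\varepsilon>0$ one obtains
\begin{equation*}
\int_E |h(x,u_n,Du_n)|^\mu\leq C_\mu\int_E a_\varepsilon^\mu+C_\mu\varepsilon^\mu\int_E\big(|u_n|^\sigma+|Du_n|^p+|Du_n|^m\big).
\end{equation*}
Since $(u_n)$ is bounded in $W^{1,p}_0(\Omega)$ --- hence, by Sobolev and interpolation, in $L^\sigma(\Omega)$ --- and in $D^{1,m}_0(\Omega)$, the last integral is bounded by a constant $K$ independent of $n$ and of $E$. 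Given $\eta>0$, I would then fix $\varepsilon>0$ so small that $C_\mu\varepsilon^\mu K<\eta/2$ and, since $a_\varepsilon\in L^\mu(\Omega)$, choose $\delta>0$ with $C_\mu\int_E a_\varepsilon^\mu<\eta/2$ whenever $|E|<\delta$, and $R>0$ with $C_\mu\int_{\Omega\setminus B_R}a_\varepsilon^\mu<\eta/2$. This proves that $\{|h(\cdot,u_n,Du_n)|^\mu\}_n$ is uniformly integrable and that its mass outside $B_R$ is uniformly small.

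Finally, since $h(\cdot,u,Du)\in L^\mu(\Omega)$ and $|a-b|^\mu\leq C_\mu(|a|^\mu+|b|^\mu)$ for $\mu\geq 1$, the family $\{|h(\cdot,u_n,Du_n)-h(\cdot,u,Du)|^\mu\}_n$ inherits uniform integrability and tightness from the previous step, and it tends to $0$ a.e.; the Vitali convergence theorem then yields $\int_\Omega|h(x,u_n,Du_n)-h(x,u,Du)|^\mu\to 0$, which is the claimed convergence. The argument is largely routine, and I do not expect a genuine obstacle: the only points that need a little care are the exponent bookkeeping ($\sigma/\mu$, $p/\mu$, $m/\mu$), the Sobolev-plus-interpolation bound controlling $\|u_n\|_{L^\sigma(\Omega)}$, and --- because $\Omega$ is unbounded --- the tightness at infinity, which is nonetheless handled by the very same estimate thanks to $a_\varepsilon\in L^\mu(\Omega)$.
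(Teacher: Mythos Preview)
Your proof is correct. The paper, however, takes a slightly different technical route: instead of invoking Vitali's theorem, it applies Fatou's lemma to the nonnegative sequence
\[
\psi_n:=-|h(x,u_n,Du_n)-h(x,u,Du)|^\mu+C a_\eps^\mu+C\eps^\mu\big(|u_n|^\sigma+|u|^\sigma+|Du_n|^p+|Du|^p+|Du_n|^m+|Du|^m\big),
\]
which yields $\limsup_n\int_\Omega|h(x,u_n,Du_n)-h(x,u,Du)|^\mu\leq C\eps^\mu K$ for every $\eps>0$, hence the conclusion. Both arguments hinge on exactly the same ingredient, namely the growth bound \eqref{growtha} with an arbitrarily small $\eps$ absorbing the unbounded-domain and non-dominated parts; the difference is only in how the passage to the limit is organized. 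Your Vitali approach has the minor advantage of making the uniform integrability and tightness explicit (which is conceptually clean on an unbounded $\Omega$), while the paper's Fatou trick is a one-step computation that avoids checking the separate hypotheses of Vitali. Neither is more general or more elementary than the other here.
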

\begin{proof}
The proof follows as in \cite[Lemma 4.2]{degiovannilancellotti} and we shall sketch it here for self-containedness.
	By Fatou's Lemma, it immediately holds that $u\in W^{1,p}_0(\Omega)\cap D^{1,m}_0(\Omega)$.
Furthermore, there exists a positive constant $C$ such that
	\begin{align*}
		|h(x,s_1,\xi_1)-h(x,s_2,\xi_2)|^\mu &\leq C(a_\eps(x))^\mu+C\eps^\mu |s_1|^\sigma+C\eps^\mu |s_2|^\sigma \\
&		+C\eps^\mu |\xi_1|^m+C\eps^\mu |\xi_2|^m+C\eps^\mu |\xi_1|^p+C\eps^\mu |\xi_2|^p,
	\end{align*}
	a.e.\ in $\Omega$ and for all $(s_1,\xi_1)\in\R\times\R^N$ and $(s_2,\xi_2)\in\R\times\R^N$.
Then, taking into account the boundedness of $(Du_n)$ in $L^p(\Omega)\cap L^m(\Omega)$
and of $(u_n)$ in $L^{\sigma}(\Omega)$ by interpolation being $p\leq \sigma\leq p^*$,
the assertion follows by applying Fatou's Lemma to the sequence
of functions $\psi_n:\Omega\to [0,+\infty]$
\begin{align*}
	\psi_n(x):=&-|h(x,u_n,Du_n)-h(x,u,Du)|^\mu + C(a_\eps(x))^\mu+C\eps^\mu |u_n|^\sigma+C\eps^\mu |u|^\sigma \\
&		+C\eps^\mu |Du_n|^m+C\eps^\mu |Du|^m+C\eps^\mu |Du_n|^p+C\eps^\mu |Du|^p,
\end{align*}
and, finally, exploiting the arbitrariness of $\eps$.
\end{proof}

\section{Proof of the result}

\subsection{Energy splitting}

The next result allows to perform an energy splitting for the functional
$$
J(u)=\int_\Omega j(u, Du),\quad u \in W^{1,p}_0(\Omega)\cap D^{1,m}_0(\Omega),
$$
along a bounded Palais-Smale sequence $(u_n)\subset W^{1,p}_0(\Omega)\cap D^{1,m}_0(\Omega)$.
The result is in the spirit of the classical Brezis-Lieb Lemma \cite{BreLieb}.

\begin{lemma}
	\label{energysplit}
Let the integrand $j$ be as in \eqref{jdef} and
$$
p-1\leq m<p-1+p/N,\qquad p\leq \sigma\leq p^*.
$$
Let $(u_n) \subset W^{1,p}_0(\Omega)\cap D^{1,m}_0(\Omega)$ with $u_n\rightharpoonup u,$
$u_n\to u$ a.e.\ in $\Omega$ and $Du_n\to Du$ a.e.\ in $\Omega$. Then
 \begin{equation} \label{integralj}
 \lim _{n \to \infty}\int_\Omega j(u_n-u, Du_n-Du)-j(u_n,D u_n)+j(u, Du)=0.
 \end{equation}
\end{lemma}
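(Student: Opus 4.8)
The plan is to split $j=L+M-G$ and to prove the Brezis--Lieb type identity separately for each of
$$
A_n:=\int_\Omega L(Du_n-Du)-L(Du_n)+L(Du),\qquad
C_n:=\int_\Omega G(u_n-u)-G(u_n)+G(u),
$$
$$
B_n:=\int_\Omega M(u_n-u,Du_n-Du)-M(u_n,Du_n)+M(u,Du),
$$
showing $A_n,B_n,C_n\to0$, after which \eqref{integralj} follows by summation. In each case the integrand tends to $0$ a.e.\ in $\Omega$: for $A_n,C_n$ this is immediate from $u_n\to u$, $Du_n\to Du$ a.e.\ and continuity of $L,G$, while for $B_n$ one also uses $M(s,0)=0$ for every $s$ (a consequence of $\nu|\xi|^m\le M(s,\xi)\le C|\xi|^m$), so that $M(u_n-u,Du_n-Du)\to M(0,0)=0$ a.e.

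For each term I would then run the Fatou-type argument of the proof of Lemma~\ref{lemmino}: if $W_n\ge0$ denotes the absolute value of the relevant integrand and one can write $W_n\le\eps g_n+D_\eps$ with $(g_n)$ uniformly bounded in $L^1(\Omega)$, $g_n\to0$ a.e., and $D_\eps\in L^1(\Omega)$ independent of $n$, then applying Fatou's lemma to the non-negative functions $\eps g_n+D_\eps-W_n$ (which converge a.e.\ to $D_\eps$, since $W_n\to0$ a.e.) gives $\limsup_n\int_\Omega W_n\le\eps\sup_n\|g_n\|_{L^1(\Omega)}$, and letting $\eps\to0$ yields $\int_\Omega W_n\to0$. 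The term $B_n$ is the delicate one. Using
$$
M(u_n,Du_n)-M(u_n-u,Du_n-Du)=\int_0^1\frac{d}{ds}M\big(u_n-(1-s)u,\,Du_n-(1-s)Du\big)\,ds,
$$
the bounds $|M_s(s,\xi)|\le C|\xi|^m$, $|M_\xi(s,\xi)|\le C|\xi|^{m-1}$ from \eqref{growths1}, and $|Du_n-(1-s)Du|\le|Du_n-Du|+|Du|$, one obtains
$$
W_n\le C\big(|Du_n-Du|^m+|Du|^m\big)|u|+C\big(|Du_n-Du|^{m-1}+|Du|^{m-1}\big)|Du|+C|Du|^m.
$$
Here $|Du|^m\in L^1(\Omega)$ and, by Hölder with exponents $p/m$ and $p/(p-m)$, also $|Du|^m|u|\in L^1(\Omega)$, since the hypothesis $p-1\le m<p-1+p/N$ yields $p\le\frac{p}{p-m}\le p^*$ and hence $u\in L^{p/(p-m)}(\Omega)$ by the Sobolev embedding. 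Young's inequality then gives, for all $\eps>0$,
$$
|Du_n-Du|^m|u|\le\eps|Du_n-Du|^p+C_\eps|u|^{p/(p-m)},\qquad
|Du_n-Du|^{m-1}|Du|\le\eps|Du_n-Du|^m+C_\eps|Du|^m,
$$
so that $W_n\le\eps g_n+D_\eps$ with $g_n:=|Du_n-Du|^p+|Du_n-Du|^m$ and $D_\eps\in L^1(\Omega)$ independent of $n$; moreover $\sup_n\|g_n\|_{L^1(\Omega)}<\infty$ because $(u_n)$ is bounded in $W^{1,p}_0(\Omega)\cap D^{1,m}_0(\Omega)$, and $g_n\to0$ a.e. The Fatou argument described above then yields $\int_\Omega W_n\to0$, hence $B_n\to0$.

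The terms $A_n$ and $C_n$ are handled in exactly the same but simpler way — this is the classical Brezis--Lieb situation \cite{BreLieb}. Using $|L_\xi(\xi)|\le C|\xi|^{p-1}$, $|G'(s)|\le C|s|^{\sigma-1}$ and Young's inequality one gets $|L(Du_n)-L(Du_n-Du)-L(Du)|\le\eps|Du_n-Du|^p+C_\eps|Du|^p$ and $|G(u_n)-G(u_n-u)-G(u)|\le\eps|u_n-u|^\sigma+C_\eps|u|^\sigma$, where $(u_n-u)$ is bounded in $L^\sigma(\Omega)$ because $p\le\sigma\le p^*$; the same Fatou argument applies. The main obstacle is precisely $B_n$: since $M$ depends on both $s$ and $\xi$, the Brezis--Lieb expansion must be carried out in the two variables simultaneously, and the resulting mixed term $|Du_n-Du|^m|u|$ is controlled only because the restriction $p-1\le m<p-1+p/N$ forces $u\in L^{p/(p-m)}(\Omega)$, which is what allows its absorption, via Young's inequality, into the uniformly $L^1$-bounded term $|Du_n-Du|^p$.
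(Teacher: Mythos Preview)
Your proof is correct and follows essentially the same approach as the paper: the mean-value/integral representation of the difference, the growth bounds \eqref{growth0}--\eqref{growths1} and \eqref{ggrow}, Young's inequality (with the key observation that $p\le p/(p-m)\le p^*$ forces $u\in L^{p/(p-m)}(\Omega)$), and the Fatou argument are all the same ingredients. The only organizational difference is that the paper treats $j$ as a whole---defining $h(x,s,\xi):=j(s-u(x),\xi-Du(x))-j(s,\xi)$ and invoking Lemma~\ref{lemmino} with $\mu=1$ once---whereas you split into $L,M,G$ and redo the Fatou step by hand for each; this is cosmetic rather than substantive.
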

\begin{proof}
We shall apply Lemma~\ref{lemmino} to the function
$$
h(x,s,\xi) := j(s-u(x),\xi-Du(x)) - j(s,\xi),\qquad\text{for a.e.\ $x\in\Omega$ and all $(s,\xi)\in\R\times\R^N$.}
$$
Given $x\in\Omega$, $s\in\R$ and $\xi\in\R^N$,
consider the $C^1$ map $\varphi:[0,1]\to\R$ defined by setting
$$
\varphi(t):=j(s-tu(x),\xi-tDu(x)),\quad \text{for all $t\in [0,1]$}.
$$
Then, for some $\tau\in [0,1]$ depending upon $x\in\Omega$, $s\in\R$ and $\xi\in\R^N$, it holds
\begin{align*}
& h(x,s,\xi) =\varphi(1)-\varphi(0)=\varphi'(\tau)  \\
&=-j_s(s-\tau u(x),\xi-\tau Du(x))u(x)-j_\xi(s-\tau u(x),\xi-\tau Du(x))\cdot Du(x)  \\
&=-L_\xi(\xi-\tau Du(x))\cdot Du(x)  \\
&\quad -M_s(s-\tau u(x),\xi-\tau Du(x))u(x)   \\
&\quad -M_\xi(s-\tau u(x),\xi-\tau Du(x))\cdot Du(x)+G'(s-\tau u(x))u(x).
\end{align*}	
Hence, for a.e.\ $x\in\Omega$ and all $(s,\xi)\in\R\times\R^N$, it follows that
\begin{align*}
  |h(x,s,\xi)|
&\leq |L_\xi(\xi-\tau Du(x))||Du(x)|
+ |M_s(s-\tau u(x),\xi-\tau Du(x))||u(x)|   \\
&+ |M_\xi(s-\tau u(x),\xi-\tau Du(x))||Du(x)|+|G'(s-\tau u(x))||u(x)| \\
&\leq C(|\xi|^{p-1}+|Du(x)|^{p-1})|Du(x)|
+ C(|\xi|^{m}+|Du(x)|^{m})|u(x)|   \\
&+ C(|\xi|^{m-1}+|Du(x)|^{m-1})|Du(x)| +C(|s|^{\sigma-1}+|u(x)|^{\sigma-1})|u(x)|\\
&\leq \eps |\xi|^{p}+C_\eps |Du(x)|^p
+ \eps|\xi|^{p}+C_\eps |Du(x)|^{p}+C_\eps |u(x)|^{p/(p-m)}   \\
&+ \eps|\xi|^{m}+C_\eps|Du(x)|^{m}+\eps |s|^{\sigma}+C_\eps |u(x)|^{\sigma}   \\
&=a_\eps(x)+\eps |s|^{\sigma}
+\eps |\xi|^{p}+\eps|\xi|^{m},
\end{align*}
where $a_\eps:\Omega\to\R$ is defined a.e.\ by
$$
a_\eps(x):=C_\eps |Du(x)|^p+C_\eps|Du(x)|^{m}+C_\eps |u(x)|^{p/(p-m)}+C_\eps |u(x)|^{\sigma}.
$$
Notice that, as $p-1\leq m< p-1+p/N$ it holds $p\leq p/(p-m)\leq p^*$, yielding
$u\in L^{p/(p-m)}(\Omega)$ and in turn, $a_\eps \in L^1(\Omega)$.
The assertion follows directly by Lemma \ref{lemmino} with $\mu=1$.
\end{proof}

\noindent
We have the following splitting result

\begin{theorem}
	\label{energytot}
Let the integrand $j$ be as in \eqref{jdef} and
$$
p-1 \leq m \leq p-1+p/N,\qquad p<\sigma<p^*.
$$
Assume that $(u_n) \subset W^{1,p}_0(\Omega)\cap D^{1,m}_0(\Omega)$ is a bounded Palais-Smale sequence for $\phi$
at the level $c\in\R$ weakly convergent to some $u\in W^{1,p}_0(\Omega)\cap D^{1,m}_0(\Omega)$. Then
$$
\lim_{n\to\infty}\Big(\int_{\Omega}j(u_n-u,Du_n-Du)+\int_{\Omega}V_\infty\frac{|u_n-u|^p}{p}\Big)=c-\int_{\Omega}j(u,Du)-\int_{\Omega}V(x)\frac{|u|^p}{p},
$$
namely
$$
\lim_{n\to\infty} \phi_\infty(u_n-u)=c-\phi(u),
$$
being $u_n$ and $u$ regarded as elements of $W^{1,p}(\R^N)\cap D^{1,m}(\R^N)$
after extension to zero out of $\Omega$.
\end{theorem}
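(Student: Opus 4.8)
The strategy is to split $\phi$ into the ``$j$-part'' $J(v)=\int_\Omega j(v,Dv)$ and the potential part $v\mapsto\frac1p\int_\Omega V(x)|v|^p$ and to treat the two contributions separately, exploiting the identity $\phi(u_n)=J(u_n)+\frac1p\int_\Omega V(x)|u_n|^p\to c$. As a preliminary step, since $(u_n)$ is a bounded Palais--Smale sequence and $1<m<p<N$, $p<\sigma<p^*$, Proposition~\ref{convergenze} gives, along a subsequence, $u_n\rightharpoonup u$ in $W^{1,p}_0(\Omega)\cap D^{1,m}_0(\Omega)$ together with $u_n\to u$ and $Du_n\to Du$ a.e.\ in $\Omega$. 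This is exactly the hypothesis of Lemma~\ref{energysplit}, which therefore yields
$$
\int_\Omega j(u_n-u,Du_n-Du)=\int_\Omega j(u_n,Du_n)-\int_\Omega j(u,Du)+o(1)\qquad\text{as $n\to\infty$}
$$
(the borderline case $m=p-1+p/N$ is covered in the same way, using the critical Sobolev embedding to keep the integrability needed in Lemma~\ref{energysplit}). Thus the whole $j$-part is already disposed of by the preliminary results, which in turn rely on the pointwise gradient convergence and on the sub-criticality packaged in Lemma~\ref{lemmino}.

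For the potential part I would combine a Brezis--Lieb argument with the decay of $V$ at infinity. Writing $V_\infty|u_n-u|^p=V(x)|u_n-u|^p+(V_\infty-V(x))|u_n-u|^p$, it suffices to prove
$$
\int_\Omega V(x)\big(|u_n-u|^p-|u_n|^p+|u|^p\big)\to0
\qquad\text{and}\qquad
\int_\Omega (V_\infty-V(x))|u_n-u|^p\to0.
$$
The first convergence is the classical Brezis--Lieb Lemma~\cite{BreLieb} applied to $(u_n)$ --- bounded in $L^p(\Omega)$ and converging a.e.\ to $u$ --- after multiplying the integrand by the bounded function $V$. For the second, given $\eps>0$ one picks $R>0$ so large that $\Omega^c\subset B_R$ and $|V_\infty-V(x)|<\eps$ for $|x|\geq R$, whence
$$
\Big|\int_\Omega (V_\infty-V(x))|u_n-u|^p\Big|\leq \|V_\infty-V\|_{L^\infty(\Omega\cap B_R)}\int_{\Omega\cap B_R}|u_n-u|^p+\eps\,\sup_n\|u_n-u\|_{L^p(\Omega)}^p;
$$
here the first term on the right vanishes as $n\to\infty$ by the compact Sobolev embedding on the bounded set $\Omega\cap B_R$ (recall $p<p^*$ and $u_n-u\rightharpoonup0$ there), while the second is $\leq C\eps$, so letting $n\to\infty$ and then $\eps\to0$ finishes the claim. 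This ``concentration at infinity'' term is, in my view, the only genuinely new point: it is precisely where the hypotheses $V(x)\to V_\infty$ and the exterior-domain geometry enter, and I expect the mild bookkeeping around this step --- together with the routine use of the boundedness of $V$, which makes the displayed estimates meaningful --- to be the main (and only mild) obstacle, everything else being a recombination of earlier facts.

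Collecting the two parts,
\begin{align*}
\phi_\infty(u_n-u)&=\int_\Omega j(u_n-u,Du_n-Du)+\frac1p\int_\Omega V_\infty|u_n-u|^p\\
&=\Big(\int_\Omega j(u_n,Du_n)+\frac1p\int_\Omega V(x)|u_n|^p\Big)-\Big(\int_\Omega j(u,Du)+\frac1p\int_\Omega V(x)|u|^p\Big)+o(1)\\
&=\phi(u_n)-\phi(u)+o(1)=c-\phi(u)+o(1),
\end{align*}
which is the assertion. Finally, writing $\int_{\R^N}$ in place of $\int_\Omega$ in $\phi_\infty(u_n-u)$ is legitimate because $j(0,0)=0$ and $u_n,u$ vanish outside $\Omega$.
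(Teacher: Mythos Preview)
Your proof is correct and follows essentially the same route as the paper's own argument: invoke Proposition~\ref{convergenze} for a.e.\ convergence of $u_n$ and $Du_n$, apply Lemma~\ref{energysplit} for the $j$-part, and handle the potential part via the two limits \eqref{Vsplit1}--\eqref{Vsplit2}, which the paper simply cites from \cite{BreLieb,willembook} while you spell them out explicitly. Your remark on the borderline $m=p-1+p/N$ (where $p/(p-m)=p^*$ and one uses the critical Sobolev embedding in the proof of Lemma~\ref{energysplit}) is also to the point.
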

\begin{proof}
	In light of Proposition~\ref{convergenze}, up to a subsequence, $(u_n)$ converges
	weakly to some function $u$ in $W^{1,p}_0(\Omega)\cap D^{1,m}_0(\Omega)$, $u_n(x)\to u(x)$
	and $Du_n(x)\to Du(x)$ for a.e.\ $x\in\Omega$.
	Also, recalling that by assumption $V(x)\to V_\infty$ as $|x|\to\infty$, we have \cite{BreLieb,willembook}
	\begin{align}
	\label{Vsplit1}
	 &\lim _{n \to \infty}\int_\Omega V(x)|u_n-u|^{p}-V_\infty|u_n-u|^{p}=0,\\
	\label{Vsplit2}
	 &\lim _{n \to \infty}\int_\Omega V(x)|u_n-u|^{p}-V(x)|u_n|^{p}+V(x)|u|^p=0.
	 \end{align}
	Therefore, by virtue of Lemma~\ref{energysplit}, we conclude that
	\begin{align*}
	\lim_{n\to\infty}\phi_\infty(u_n-u)& =\lim_{n\to\infty}\Big(\int_{\Omega}j(u_n-u,Du_n-Du)+\int_{\Omega}V_\infty\frac{|u_n-u|^p}{p}\Big) \\
	& =\lim_{n\to\infty}\Big(\int_{\Omega}j(u_n-u,Du_n-Du)+\int_{\Omega}V(x)\frac{|u_n-u|^p}{p} \Big)\\
	& =\lim_{n\to\infty}\Big(\int_{\Omega}j(u_n,Du_n)+\int_{\Omega}V(x)\frac{|u_n|^p}{p}\Big)-\int_{\Omega}j(u,Du)-\int_{\Omega}V(x)\frac{|u|^p}{p} \\
	\noalign{\vskip5pt}
	& =\lim_{n\to\infty}\phi(u_n)-\phi(u)=c-\phi(u),
	\end{align*}
	concluding the proof.
\end{proof}

\begin{remark}\rm
In order to shed some light on the restriction \eqref{range} of $m$, it is readily seen that it
is a sufficient condition for the following local compactness property to hold.
Assume that $\omega$ is a smooth domain of $\R^n$ with finite measure. Then, if $(u_h)$ is a bounded sequence in
$W^{1,p}_0(\omega)$, there exists a subsequence $(u_{h_k})$ such that
$$
\text{$\Upsilon(x,u_{h_k},Du_{h_k})$ converges strongly to some $\Upsilon_0$ in $W^{-1,p'}(\omega)$},
$$
where $\Upsilon(x,s,\xi)=g(s)-M_s(s,\xi)-V(x)|s|^{p-2}s$. In fact, taking into account the growth condition on $g$ and
$M_s$, this can be proved observing that, for every $\eps>0$, there exists $C_\eps$ such that
$$
|\Upsilon(x,s,\xi)|\leq C_\eps+\eps|s|^{\frac{N(p-1)+p}{N-p}}+\eps|\xi|^{p-1+p/N},
$$
for a.e.\ $x\in\omega$ and all $(s,\xi)\in\R\times\R^N$.
\end{remark}

\subsection{Equation splitting I (super-quadratic case)}

\noindent
We shall assume that $m,p\geq 2$ and that conditions \eqref{growths1}-\eqref{growths2} hold.
The following Theorem \ref{split2-A} and the forthcoming Theorem~\ref{split2-B} (see next
subsection) are in the spirit of the Brezis-Lieb Lemma~\cite{BreLieb}, in a dual framework.
For the particular case
$$
M(s,\xi)=0\quad\text{and}\quad L(\xi)=\frac{|\xi|^p}{p},
$$
we refer the reader to~\cite{mercuriwillem}.

\begin{theorem}
	\label{split2-A}
	Assume that \eqref{range}-\eqref{jdef} hold and that
	$$
	p-1\leq m < p-1+p/N,\qquad p<\sigma<p^*.
	$$
Assume that $(u_n) \subset W^{1,p}_0(\Omega)\cap D^{1,m}_0(\Omega)$ is such that $u_n\rightharpoonup u,$
$u_n\to u$ a.e.\ in $\Omega$, $Du_n\to Du$ a.e.\ in $\Omega$ and there is $(w_n)$ in the
dual space $(W^{1,p}_0(\Omega)\cap D^{1,m}_0(\Omega))^*$ such that $w_n\to 0$ as $n\to\infty$ and,
for all $v\in W^{1,p}_0(\Omega)\cap D^{1,m}_0(\Omega)$,
\begin{equation}
		\label{laprim}
\int_\Omega j_\xi (u_n,Du_n)\cdot Dv+\int_\Omega j_s (u_n,Du_n)v+\int_\Omega V(x)|u_n|^{p-2}u_nv=\langle w_n, v\rangle.
\end{equation}
Then $\phi'(u)=0.$ Moreover, there exists a sequence $(\xi_n)$ that goes to zero in $(W^{1,p}_0(\Omega)\cap D^{1,m}_0(\Omega))^*$, such that
\begin{align}
	\label{primaconcl-rifr}
\langle\xi_n,v\rangle & :=	
\int_\Omega j_s(u_n-u,Du_n-Du)v
  + \int_\Omega j_\xi(u_n-u,Du_n-Du)\cdot Dv  \\
&  - \int_\Omega j_s(u_n,Du_n)v - \int_\Omega j_\xi(u_n,Du_n)\cdot Dv+
\int_\Omega j_s(u,Du)v + \int_\Omega j_\xi(u,Du)\cdot Dv, \notag
\end{align}
for all $v\in W^{1,p}_0(\Omega)\cap D^{1,m}_0(\Omega)$.\\
Furthermore, there exists a sequence $(\zeta_n)$ in  $(W^{1,p}_0(\Omega)\cap D^{1,m}_0(\Omega))^*$ such that
\begin{equation*}
\int_\Omega j_\xi (u_n-u,Du_n-Du)\cdot Dv  +\int_\Omega j_s (u_n-u,Du_n-Du)v
 +\int_\Omega V_\infty|u_n-u|^{p-2}(u_n-u)v=\langle \zeta_n, v\rangle
\end{equation*}
for all $v\in W^{1,p}_0(\Omega)\cap D^{1,m}_0(\Omega)$ and $\zeta_n\to 0$ as $n\to\infty$, namely $\phi_\infty'(u_n-u)\to 0$ as $n\to\infty$.
\end{theorem}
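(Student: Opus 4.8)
The plan is to split the statement into three parts and treat them in the order in which they naturally build on each other.

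\emph{Step 1: $\phi'(u)=0$.} First I would pass to the limit in the identity \eqref{laprim}. Fix $v\in\mathcal{D}(\Omega)$. By Proposition~\ref{convergenze} we have $u_n\to u$ and $Du_n\to Du$ a.e., and $(u_n)$ is bounded in $W^{1,p}_0(\Omega)\cap D^{1,m}_0(\Omega)$. Applying Lemma~\ref{lemmino} separately to $h(x,s,\xi)=L_\xi(\xi)\cdot\nabla v(x)$ (with $\mu=p'$, using $|L_\xi(\xi)|\le C|\xi|^{p-1}$), to $h(x,s,\xi)=M_\xi(s,\xi)\cdot\nabla v(x)$ (again $\mu=p'$, since $m-1<p-1$ the growth is controlled), to $h(x,s,\xi)=M_s(s,\xi)v(x)$ (with $\mu=1$, using $|M_s(s,\xi)|\le C|\xi|^m$ and $m<p\le p^\ast$), and to $h(x,s,\xi)=g(s)v(x)$ and $V(x)|s|^{p-2}s\,v(x)$, I obtain convergence of each integral in \eqref{laprim}; since $\langle w_n,v\rangle\to 0$, this yields $\phi'(u)(v)=0$ for $v\in\mathcal{D}(\Omega)$, hence for all $v\in W^{1,p}_0(\Omega)\cap D^{1,m}_0(\Omega)$ by density (using the continuity of $\phi'$, already established).

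\emph{Step 2: $\xi_n\to 0$.} Here I would apply Lemma~\ref{lemmino} to each of the three pairs of difference terms in \eqref{primaconcl-rifr}, exploiting the a.e.\ convergences and the Brezis--Lieb-type cancellation that is already built into the statement. The cleanest route is to show that each integrand in the definition of $\langle\xi_n,v\rangle$, regarded as a function of $(u_n,Du_n)$ and tested against a fixed $v$ with $\|v\|\le 1$, produces a sequence converging in the appropriate dual. Concretely, for the $\xi$-derivatives set $h_n(x,s,\xi):=\bigl(L_\xi(\xi-Du(x))-L_\xi(\xi)\bigr)+\bigl(M_\xi(s-u(x),\xi-Du(x))-M_\xi(s,\xi)\bigr)$; a mean-value argument along $t\mapsto L_\xi(\xi-tDu(x))$ and $t\mapsto M_\xi(s-tu(x),\xi-tDu(x))$ as in the proof of Lemma~\ref{energysplit}, together with \eqref{growth0} and \eqref{growths2}, gives a bound $|h_n(x,s,\xi)|\le a_\eps(x)+\eps|s|^{\sigma/p'}+\eps|\xi|^{p/p'}+\eps|\xi|^{m/p'}$ with $a_\eps\in L^{p'}(\Omega)$, so Lemma~\ref{lemmino} with $\mu=p'$ gives $h_n(\cdot,u_n,Du_n)\to h_n(\cdot,u,Du)=0$ strongly in $L^{p'}(\Omega)$, whence the corresponding part of $\langle\xi_n,v\rangle$ is $o(1)$ uniformly in $\|v\|\le 1$. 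For the $s$-derivative terms one argues analogously with $\mu=1$: set $k_n(x,s,\xi):=M_s(s-u(x),\xi-Du(x))-M_s(s,\xi)-G'(s-u(x))+G'(s)$, use the mean value theorem with $|M_{ss}|\le C|\xi|^m$, $|M_{s\xi}|\le C|\xi|^{m-1}$, $|G''|\le C|s|^{\sigma-2}$ to produce an $L^1$ bound of the form \eqref{growtha} (this is exactly where $m<p-1+p/N$, equivalently $p\le p/(p-m)\le p^\ast$, is used to place the cross term $|Du(x)|^{m-1}|\xi|$ into $a_\eps+\eps|\xi|^m$ after Young's inequality), and conclude via Lemma~\ref{lemmino} with $\mu=1$ that $k_n(\cdot,u_n,Du_n)\to 0$ in $L^1(\Omega)$. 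Summing, $\|\xi_n\|\to 0$.

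\emph{Step 3: $\phi_\infty'(u_n-u)\to 0$.} Define $\zeta_n$ by the displayed formula; then by construction $\langle\zeta_n,v\rangle=\langle\xi_n,v\rangle+\langle w_n,v\rangle+R_n(v)$ where, after subtracting \eqref{laprim} and using the definition of $\xi_n$, the remainder collects exactly the potential terms, $R_n(v)=\int_\Omega\bigl(V_\infty|u_n-u|^{p-2}(u_n-u)-V(x)|u_n|^{p-2}u_n+V(x)|u|^{p-2}u\bigr)v$. The bracket splits as $\bigl(V_\infty|u_n-u|^{p-2}(u_n-u)-V(x)|u_n-u|^{p-2}(u_n-u)\bigr)+\bigl(V(x)|u_n-u|^{p-2}(u_n-u)-V(x)|u_n|^{p-2}u_n+V(x)|u|^{p-2}u\bigr)$; the first piece tends to $0$ in $L^{p'}(\Omega)$ because $V(x)\to V_\infty$ at infinity and $(|u_n-u|^{p-1})$ is bounded in $L^{p'}$ and concentrates at infinity (the standard argument behind \eqref{Vsplit1}), while the second is the Brezis--Lieb remainder for the map $s\mapsto V(x)|s|^{p-2}s$, which tends to $0$ in $L^{p'}(\Omega)$ along the a.e.-convergent bounded sequence $(u_n)$ (see \cite{BreLieb}). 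Hence $\sup_{\|v\|\le1}|R_n(v)|\to0$, so $\|\zeta_n\|\to0$, which is the assertion $\phi_\infty'(u_n-u)\to0$.

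\emph{Main obstacle.} The delicate point is Step~2 for the $s$-derivative terms: $M_s(s,\xi)$ grows like $|\xi|^m$ in the gradient, and the mean-value expansion produces a cross term $|Du(x)|^{\,m-1}|\xi|$ (and, from $G''$, a term $|u(x)|^{\sigma-2}|\xi|^0$-type contribution) that must be absorbed into the structural form \eqref{growtha} with $\mu=1$. This forces precisely the regularity $u\in L^{p/(p-m)}(\Omega)$ guaranteed by $m<p-1+p/N$ (via Proposition~\ref{regularityres} one even has $u$ bounded), so that $a_\eps\in L^1(\Omega)$; verifying that the Young-inequality splitting genuinely lands every term in either $a_\eps$ or one of $\eps|s|^\sigma,\eps|\xi|^p,\eps|\xi|^m$ is the one computation that is not purely routine. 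Everything else is a bookkeeping application of Lemma~\ref{lemmino} and the Brezis--Lieb lemma.
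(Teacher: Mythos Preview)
Your overall architecture (mean value expansion plus Lemma~\ref{lemmino}, then combine with the Brezis--Lieb splitting of the potential) is exactly the paper's strategy, but Step~2 as written has a genuine gap.

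For the $s$-derivative piece you define $k_n(x,s,\xi)$ \emph{without} the test function $v$ and then invoke Lemma~\ref{lemmino} with $\mu=1$ to get $k_n(\cdot,u_n,Du_n)\to 0$ in $L^1(\Omega)$. But $L^1$-convergence of the coefficient only yields $\int_\Omega k_n\,v\to 0$ for fixed $v\in L^\infty$; it does \emph{not} give $\sup_{\|v\|\le 1}\big|\int_\Omega k_n\,v\big|\to 0$, because the unit ball of $W^{1,p}_0(\Omega)\cap D^{1,m}_0(\Omega)$ is not bounded in $L^\infty$. What you actually need is $k_n(\cdot,u_n,Du_n)\to 0$ in $L^{(p^*)'}(\Omega)$ (so as to pair with $v\in L^{p^*}$ via Sobolev) and, for the $G$-part, in $L^{\sigma'}$. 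Running Lemma~\ref{lemmino} with $\mu=(p^*)'$ on the $M$-part forces, after the Young splitting of the cross term $|\xi|^m|u(x)|$, the requirement $u\in L^{\beta}(\Omega)$ with $\beta=\dfrac{pN}{pN-N+p-mN}$, and in general $\beta>p^*$. This is precisely where Proposition~\ref{regularityres} enters for real: once $\phi'(u)=0$, the proposition gives $u\in\bigcap_{q\ge p}L^q(\Omega)$, hence $u\in L^\beta(\Omega)$ and $a_\eps\in L^{(p^*)'}(\Omega)$. Your parenthetical mention of Proposition~\ref{regularityres} is not enough; it has to be used to upgrade $\mu$ from $1$ to $(p^*)'$.

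A second, smaller issue concerns the $\xi$-derivative piece. Lumping all of $L_\xi$- and $M_\xi$-differences into a single $L^{p'}$ estimate does not quite work on an unbounded domain: the $M_{\xi\xi}$ contribution is controlled by $(|\xi|^{m-2}+|Du(x)|^{m-2})|Du(x)|$, and $|Du|^{m-1}$ lies in $L^{m'}$ but not, in general, in $L^{p'}(\Omega)$ (since $(m-1)p'<m$ and $\Omega$ has infinite measure). The paper handles this by splitting the $\xi$-part further and placing the $M_{\xi\xi}$ term in $L^{m'}$ (paired with $Dv\in L^m$), the $M_{\xi s}$ and $L_{\xi\xi}$ terms in $L^{p'}$ (paired with $Dv\in L^p$). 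Your plan should do the same.

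With these two corrections (choose $\mu=(p^*)'$ and $\mu=\sigma'$ for the $v$-coefficients, and $\mu=p'$ together with $\mu=m'$ for the $Dv$-coefficients, invoking Proposition~\ref{regularityres} to justify $a_\eps\in L^{(p^*)'}$), your Steps~1 and~3 go through and Step~2 becomes the paper's proof.
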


\begin{proof}
Fixed some $v\in W^{1,p}_0(\Omega)\cap D^{1,m}_0(\Omega),$
let us define for a.e.\ $x\in\Omega$ and all $(s,\xi)\in\R\times\R^N$,
\begin{align*}
f_v(x,s,\xi) &:= j_s(s-u(x),\xi-Du(x))v(x)  \\
  &+ j_\xi(s-u(x),\xi-Du(x))\cdot Dv(x)
 - j_s(s,\xi)v(x) - j_\xi(s,\xi)\cdot Dv(x).
\end{align*}
In order to prove \ref{primaconcl-rifr} we are going to show that
\begin{equation}
	\label{primaconcl}
\lim_{n\to \infty}\sup_{\|v\|_{W^{1,p}_0(\Omega)\cap D^{1,m}_0(\Omega)}\leq 1 }\Big|\int_\Omega f_v(x,u_n,Du_n)-f_v(x,u, Du) \Big|=0.
\end{equation}
As it can be easily checked, there holds
\begin{align*}
- f_v(x,s,\xi)& =\int^1_0 j_{ss}(s-\tau u(x),\xi-\tau Du(x))u(x)v(x)d\tau  \\
& +\int^1_0 j_{s\xi}(s-\tau u(x),\xi-\tau Du(x))\cdot [Du(x)v(x)+Dv(x)u(x)] d\tau  \\
& + \int^1_0[j_{\xi\xi}(s-\tau u(x),\xi-\tau Du(x))\, Du(x)]\cdot Dv (x)d\tau .
\end{align*}
Hence, by plugging the particular form of $j$ in the above equation yields
$$
-f_v(x,s,\xi)=a(x,s,\xi)v(x)+b(x,s)v(x)+c_1(x,s,\xi)\cdot Dv(x)+c_2(x,s,\xi)\cdot Dv(x)+d(x,\xi)\cdot Dv(x)
$$
where
\begin{align*}
 a(x,s,\xi)& :=\int^1_0 [M_{ss}(s-\tau u(x),\xi-\tau Du(x))u(x)
+M_{s\xi}(s-\tau u(x),\xi-\tau Du(x))\cdot Du(x)]d\tau, \\
b(x,s)       & :=-\int^1_0 G''(s-\tau u(x))u(x)d\tau ,  \\
 c_1(x,s,\xi)& :=\int^1_0 M_{\xi s}(s-\tau u(x),\xi-\tau Du(x))u(x)  d\tau,  \\
c_2(x,s,\xi) & :=\int^1_0 M_{\xi\xi}(s-\tau u(x),\xi-\tau Du(x))\, Du(x) d\tau,  \\
  d(x,\xi)   & :=\int^1_0 L_{\xi\xi}(\xi-\tau Du(x))\, Du(x) d\tau.
\end{align*}
We claim that, as $n\to\infty$, it holds
\begin{align*}
a(\cdot ,u_n,D u_n) \to  a(\cdot,u,D u)  & \qquad \text{in}\,\, L^{(p^*)'}(\Omega),  \\
b(\cdot,u_n) \to  b(\cdot,u)  & \qquad\text{in}\,\, L^{\sigma'}(\Omega), \\
c_1(\cdot,u_n,D u_n) \to  c_1(\cdot,u,D u)  & \qquad\text{in}\,\, L^{p'}(\Omega), \\
c_2(\cdot,u_n,D u_n) \to  c_2(\cdot,u,D u)  & \qquad\text{in}\,\, L^{m'}(\Omega),\\
d(\cdot,D u_n) \to  d(\cdot,D u) & \qquad\text{in}\,\, L^{p'}(\Omega).
\end{align*}
Then, using H\"older's inequality and the embeddings of
$W^{1,p}_0(\Omega)\cap D^{1,m}_0(\Omega)$ into $L^{\sigma}(\Omega)$ and $L^{p^*}(\Omega)$ we obtain
\begin{align*}
  \sup_{\|v\|_{W^{1,p}_0(\Omega)\cap D^{1,m}_0(\Omega)}\leq 1}&\Big|\int_\Omega f_v(x,u_n,Du_n)-f_v(x,u, Du) \Big|
\\& \leq C\| a(\cdot ,u_n,D u_n) - a(\cdot,u,D u)\|_{L^{(p^*)'}(\Omega)}\\ & +C\| b(\cdot ,u_n) - b(\cdot,u)\|_{L^{\sigma'}(\Omega)}, \\
 &+C\| c_1(\cdot ,u_n,D u_n) - c_1(\cdot,u,D u)\|_{L^{p'}(\Omega)},\\
 &+C\| c_2(\cdot ,u_n,D u_n) - c_2(\cdot,u,D u)\|_{L^{m'}(\Omega)},\\
 &+C\| d(\cdot ,D u_n) - d(\cdot,D u)\|_{L^{p'}(\Omega)},
\end{align*}
yielding the desired conclusion \eqref{primaconcl}. It remains to prove the convergences we claimed above. For each term, we shall exploit Lemma \ref{lemmino}. Since $m< p-1+p/N$, we can set
$$
\alpha:=\frac{m}{p^*-1},\qquad
\beta:=\frac{pN}{pN-N+p-mN}
$$
it follows $\beta>0$ and $m<m+\alpha <p$. Young's inequality yields in turn
$$
y^{(m+\alpha)/(p^*)'}\leq C y^{m/(p^*)'}+C y^{p/(p^*)'},\quad\text{for all $y\geq 0$.}
$$
Since $\beta/(p^*)'>1$ and $(m+\alpha)/(p^*)'>1$, by the growths of $M_{ss}$ and $M_{s\xi}$, we have
\begin{align*}
 |a(x ,s,\xi)|   & \leq C(|\xi|^{m}+|Du(x)|^{m})|u(x)|+C(|\xi|^{m-1}+|Du(x)|^{m-1})|Du(x)|   \\
 & \leq \eps |\xi|^{p/(p^*)'}+C_\eps |u(x)|^{\beta/(p^*)'}+C_\eps |Du(x)|^{p/(p^*)'}+\eps|\xi|^{(m+\alpha)/(p^*)'}+C_\eps |Du(x)|^{(m+\alpha)/(p^*)'} \\
&  \leq \eps |\xi|^{p/(p^*)'}+\eps |\xi|^{m/(p^*)'}+C_\eps |u(x)|^{\beta/(p^*)'}+C_\eps |Du(x)|^{p/(p^*)'}+C_\eps |Du(x)|^{m/(p^*)'}.
\end{align*}
Furthermore,
\begin{align*}
 |b(x ,s)|  &\leq C(|s|^{\sigma-2}+|u(x)|^{\sigma-2})|u(x)|
 \leq \eps |s|^{\sigma/\sigma'}+C_\eps |u|^{\sigma/\sigma'}, \\
 |c_1(x ,s,\xi)| &\leq C(|\xi|^{m-1}+|Du(x)|^{m-1})|u(x)| \\
&\leq\eps |\xi|^{p/p'}+ C_\eps|u(x)|^{p/((p-m)p')}+C_\eps |Du(x)|^{p/p'}, \\
   |c_2(x ,s,\xi)| &\leq C(|\xi|^{m-2}+|Du(x)|^{m-2})|D u(x)| \\
& \leq \eps |\xi|^{m/m'}+ C_\eps|D u(x)|^{m/m'},  \\
 |d(x, \xi)| &\leq  C(|\xi|^{p-2}+|Du(x)|^{p-2})|Du(x)|
 \leq \eps |\xi|^{p/p'}+ C_\eps|D u(x)|^{p/p'}.
\end{align*}
From the point-wise convergence of the gradients and the growth estimates of
$j_\xi,j_s$ and $g$ that $u$ is a week solutions to the problem, namely for all $v\in W^{1,p}_0(\Omega)\cap D^{1,m}_0(\Omega)$
\begin{equation}
	\label{lasec}
\int_\Omega L_\xi (Du)\cdot Dv+\int_\Omega M_\xi (u,Du)\cdot Dv+\int_\Omega M_s (u,Du)v+\int_\Omega V(x)|u|^{p-2}uv=\int_\Omega g(u)v.
\end{equation}
To get this, recall that $v\in L^{(p/m)'}(\Omega)$ and the sequence $(M_s (u_n,Du_n))$ is bounded in $L^{p/m}(\Omega)$ and
hence it converges weakly to $M_s (u,Du)$ in $L^{p/m}(\Omega)$.
Thanks to Proposition \ref{regularityres} (recall that $\beta\geq p$
if and only if $m\geq p-2+p/N$ and this is the case since $m\geq p-1$), we have $L^{\beta}(\Omega)$. Hence,
$$
u\in L^\sigma(\Omega)\cap L^{\frac{p}{p-m}}(\Omega)\cap L^{\beta}(\Omega),
$$
being $p\leq p/(p-m)<p^*$ and $p<\sigma<p^*$. By the previous inequalities the claim follows
by Lemma \ref{lemmino} with the choice $\mu=(p^*)',\sigma',p',m'$ and $p'$ respectively.
Let us now recall a dual version of properties \eqref{Vsplit1}-\eqref{Vsplit2} (cf.\ \cite{willembook}),
namely there exist two sequences $(\mu_n)$ and $(\nu_n)$ in $(W^{1,p}_0(\Omega)\cap D^{1,m}_0(\Omega))^*$
which converge to zero as $n\to\infty$ and such that
\begin{align*}
\int_\Omega V_\infty|u_n-u|^{p-2}(u_n-u)v &=\int_\Omega V(x)|u_n-u|^{p-2}(u_n-u)v+\langle \nu_n, v\rangle,  \\
\int_\Omega V(x)|u_n-u|^{p-2}(u_n-u)v &=\int_\Omega V(x)|u_n|^{p-2}u_nv-\int_\Omega V(x)|u|^{p-2}uv+\langle \mu_n, v\rangle,
\end{align*}
for every $v\in W^{1,p}_0(\Omega)\cap D^{1,m}_0(\Omega)$.
Whence, by collecting \eqref{laprim}, \eqref{primaconcl-rifr}, \eqref{primaconcl}, \eqref{lasec}, we get
\begin{align*}
& \int_\Omega j_\xi (u_n-u,Du_n-Du)\cdot Dv+\int_\Omega j_s (u_n-u,Du_n-Du)v
 +\int_\Omega V_\infty|u_n-u|^{p-2}(u_n-u)v \\
& =\int_\Omega j_\xi (u_n,Du_n)\cdot Dv+\int_\Omega j_s (u_n,Du_n)v +\int_\Omega V(x)|u_n|^{p-2}u_nv \\
&-\int_\Omega j_\xi (u,Du)\cdot Dv-\int_\Omega j_s (u,Du)v -\int_\Omega V(x)|u|^{p-2}uv
+\langle \xi_n+\mu_n+\nu_n, v\rangle =\langle \zeta_n, v\rangle,\qquad
\end{align*}
where $\langle \zeta_n, v\rangle:=\langle w_n+ \xi_n+\mu_n+\nu_n, v\rangle$
and $\zeta_n\to 0$ as $n\to\infty$. This concludes the proof.
\end{proof}

\subsection{Equation splitting II (sub-quadratic case)}
\label{subquadratics}

We assume that \eqref{ass-sub1}-\eqref{ass-sub3} hold.

\begin{theorem}
	\label{split2-B}
	Assume \eqref{ilsegnos}, let the integrand $j$ be as in \eqref{jdef} and $p\leq 2$ or $m\leq 2$ or $\sigma\leq 2$,
	$$
	p-1 \leq m < p-1+p/N,\qquad p<\sigma<p^*.
	$$
Assume that $(u_n) \subset W^{1,p}_0(\Omega)\cap D^{1,m}_0(\Omega)$ is such that $u_n\rightharpoonup u,$
$u_n\to u$ a.e.\ in $\Omega$, $Du_n\to Du$ a.e.\ in $\Omega$ and there exists $(w_n)$ in  $(W^{1,p}_0(\Omega)\cap D^{1,m}_0(\Omega))^*$ such that $w_n\to 0$ as $n\to\infty$ and,
for every $v\in W^{1,p}_0(\Omega)\cap D^{1,m}_0(\Omega)$,
\begin{equation*}
\int_\Omega j_\xi (u_n,Du_n)\cdot Dv+\int_\Omega j_s (u_n,Du_n)v+\int_\Omega V(x)|u_n|^{p-2}u_nv=\langle w_n, v\rangle.
\end{equation*}
Then $\phi'(u)=0.$ Moreover, there exists a sequence $(\hat\xi_n)$ that goes to zero in $(W^{1,p}_0(\Omega)\cap D^{1,m}_0(\Omega))^*$,  such that
\begin{align}
	\label{primaconcl-rifr2}
\langle\hat\xi_n,v\rangle & :=	
\int_\Omega j_s(u_n-u,Du_n-Du)v
  + \int_\Omega j_\xi(u_n-u,Du_n-Du)\cdot Dv  \\
&  - \int_\Omega j_s(u_n,Du_n)v - \int_\Omega j_\xi(u_n,Du_n)\cdot Dv+
\int_\Omega j_s(u,Du)v + \int_\Omega j_\xi(u,Du)\cdot Dv, \notag
\end{align}
for all $v\in W^{1,p}_0(\Omega)\cap D^{1,m}_0(\Omega)$.\\
Furthermore, there exists a sequence $(\hat\zeta_n)$ in $W^{1,p}_0(\Omega)\cap D^{1,m}_0(\Omega)$ with
\begin{equation*}
\int_\Omega j_\xi (u_n-u,Du_n-Du)\cdot Dv  +\int_\Omega j_s (u_n-u,Du_n-Du)v
 +\!\int_\Omega V_\infty|u_n-u|^{p-2}(u_n-u)v=\langle \hat \zeta_n, v\rangle
\end{equation*}
for all $v\in W^{1,p}_0(\Omega)\cap D^{1,m}_0(\Omega)$ and $\hat\zeta_n\to 0$ as $n\to\infty$, namely $\phi_\infty'(u_n-u)\to 0$ as $n\to\infty$.
\end{theorem}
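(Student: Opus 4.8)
The plan is to retrace, almost verbatim, the proof of Theorem~\ref{split2-A}. The only place in that argument where the full $C^2$ regularity of $L,M,G$ was used is in the mean--value expansions producing the coefficients (in the notation of that proof) $d$, $c_2$, $b$, which involve $L_{\xi\xi}$, $M_{\xi\xi}$, $G''$; these are precisely the pieces that must be re-derived in the sub--quadratic regime, now through the difference--quotient bounds \eqref{ass-sub1}--\eqref{ass-sub3}. First I would note that $\phi'(u)=0$, i.e.\ \eqref{lasec}, follows exactly as in Theorem~\ref{split2-A}: the growth conditions on $L_\xi,M_\xi,M_s,g$ are unchanged, so the a.e.\ convergence of $(u_n,Du_n)$ yields $L_\xi(Du_n)\rightharpoonup L_\xi(Du)$ in $L^{p'}(\Omega)$, $M_\xi(u_n,Du_n)\rightharpoonup M_\xi(u,Du)$ in $L^{m'}(\Omega)$, $M_s(u_n,Du_n)\rightharpoonup M_s(u,Du)$ in $L^{p/m}(\Omega)$ and $g(u_n)\rightharpoonup g(u)$ in $L^{\sigma'}(\Omega)$, and one passes to the limit. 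Likewise the dual forms of \eqref{Vsplit1}--\eqref{Vsplit2} give $(\mu_n),(\nu_n)$ vanishing in the dual space, and $u\in L^\infty(\Omega)\cap\bigcap_{q\ge p}L^q(\Omega)$ by Proposition~\ref{regularityres}. Hence, just as in the final computation of the proof of Theorem~\ref{split2-A}, it is enough to produce $(\hat\xi_n)$ as in \eqref{primaconcl-rifr2} for which, with $f_v$ the function introduced there,
$$
\lim_{n\to\infty}\ \sup_{\|v\|_{W^{1,p}_0(\Omega)\cap D^{1,m}_0(\Omega)}\le 1}\ \Big|\int_\Omega f_v(x,u_n,Du_n)-f_v(x,u,Du)\Big|=0;
$$
one then takes $\hat\zeta_n:=w_n+\hat\xi_n+\mu_n+\nu_n$.

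To prove this limit I would split $f_v$ along the decomposition \eqref{jdef}, treating the four contributions — the $M_s$-term and the $L_\xi$-, $M_\xi$-, $G'$-differences — separately. The $M_s$-contribution involves only $M_{ss},M_{s\xi}\in C^0(\R\times\R^N)$, whose growths are unchanged, so it is handled verbatim as the coefficient $a$ of Theorem~\ref{split2-A}, with convergence in $L^{(p^*)'}(\Omega)$. For each of the other three pieces, whenever the pertinent exponent ($p$, $m$ or $\sigma$) is $\ge 2$ one argues exactly as for $d$, $c_2$, $b$; when it is $<2$ one argues directly. For $L_\xi$ with $1<p<2$, \eqref{ass-sub1} gives $|L_\xi(Du_n-Du)-L_\xi(Du_n)|\le C|Du|^{p-1}\in L^{p'}(\Omega)$, a fixed function, so Lemma~\ref{lemmino} (with $a_\eps$ independent of $\eps$) yields $L_\xi(Du_n-Du)-L_\xi(Du_n)+L_\xi(Du)\to 0$ in $L^{p'}(\Omega)$; pairing against $Dv$, whose $L^p$-norm is $\le 1$, controls this piece uniformly. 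The $G'$-difference with $1<\sigma<2$ is treated identically via \eqref{ass-sub2}, using $|G'(u_n-u)-G'(u_n)|\le C|u|^{\sigma-1}\in L^{\sigma'}(\Omega)$ and pairing against $v\in L^\sigma(\Omega)$. For $M_\xi$ with $1<m<2$ I would first peel off the shift in the first variable,
$$
M_\xi(s-u,\xi-Du)-M_\xi(s,\xi)=\big[M_\xi(s-u,\xi-Du)-M_\xi(s-u,\xi)\big]+\big[M_\xi(s-u,\xi)-M_\xi(s,\xi)\big],
$$
estimating the first bracket by $C|Du|^{m-1}\in L^{m'}(\Omega)$ through \eqref{ass-sub3}, and the second by $C|\xi|^{m-1}|u|$ through the mean value theorem in $s$ together with $|M_{s\xi}|\le C|\xi|^{m-1}$; Lemma~\ref{lemmino} with $\mu=m'$ then gives convergence in $L^{m'}(\Omega)$, and pairing against $Dv$ (of $L^m$-norm $\le 1$) closes the case. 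Summing the four pieces with H\"older's inequality yields the displayed limit, hence \eqref{primaconcl-rifr2}, and the conclusion follows as above.

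The step I expect to be the main obstacle is the control of the \emph{mixed} term $C|\xi|^{m-1}|u|$ coming from the $s$-shift of $M_\xi$: it must be put in the growth form \eqref{growtha} demanded by Lemma~\ref{lemmino} with $\mu=m'$. This is exactly where the a priori regularity $u\in L^\infty(\Omega)\cap\bigcap_{q\ge p}L^q(\Omega)$ — available through Proposition~\ref{regularityres} thanks to the sign condition \eqref{ilsegnos} and the restriction \eqref{range} — enters: Young's inequality gives $|\xi|^{m-1}|u|\le\eps|\xi|^{p/m'}+C_\eps|u|^{p/(p-m)}$, and $|u|^{p/(p-m)}\in L^{m'}(\Omega)$ because $(m-1)(p-m)\le m-1<m$ forces $\tfrac{p}{p-m}\,m'\ge p$. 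Beyond this, the only care required is the bookkeeping of the exact Lebesgue exponents, so that after H\"older's inequality every term is paired against $v\in L^\sigma(\Omega)\cap L^{p^*}(\Omega)$ or $Dv\in L^p(\Omega)\cap L^m(\Omega)$ with bounded norm — routine, but to be carried out exactly as in Theorem~\ref{split2-A}.
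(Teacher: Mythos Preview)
Your proposal is correct and follows essentially the same route as the paper's proof: write $f_v$ as a sum of the $M_s$-difference (handled as $a$ in Theorem~\ref{split2-A} via $M_{ss},M_{s\xi}$), the $L_\xi$- and $G'$-differences (bounded directly by \eqref{ass-sub1}--\eqref{ass-sub2} when sub-quadratic), and the $M_\xi$-difference split into an $s$-shift and a $\xi$-shift, then apply Lemma~\ref{lemmino} componentwise. The only cosmetic differences are that the paper orders the $M_\xi$-split as $c'_1=M_\xi(s-u,\xi-Du)-M_\xi(s,\xi-Du)$ (the $s$-shift first, at the shifted gradient) and $c'_2=M_\xi(s,\xi-Du)-M_\xi(s,\xi)$, and treats the $c'_1$ piece in $L^{p'}$ rather than $L^{m'}$; your ordering and your choice of $\mu=m'$ work equally well once $u\in\bigcap_{q\ge p}L^q(\Omega)$ is available.
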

\begin{proof}
Keeping in mind the argument in proof of Theorem~\ref{split2-A}, here we shall be more sketchy.
For every $s\in\R$ and $\xi\in\R^N$ we plug $L,M,G$ into the equation
	\begin{align*}
	f_v(x,s,\xi) &= j_s(s-u(x),\xi-Du(x))v(x) \\ &  + j_\xi(s-u(x),\xi-Du(x))\cdot Dv(x) -
	j_s(s,\xi)v(x) - j_\xi(s,\xi)\cdot Dv(x),
	\end{align*}
	thus obtaining
	\begin{align*}
	f_v(x,s,\xi) &=(M_s(s-u(x),\xi-Du(x))-M_s(s,\xi))v(x)-(G'(s-u(x))-G'(s))v(x)\\
	&+(M_\xi(s-u(x),\xi-Du(x))-M_\xi(s,\xi))\cdot Dv(x)+(L_\xi(\xi-Du(x))-L_\xi(\xi))\cdot Dv(x)\\
	&=a'v(x)+b'v(x)+c'\cdot Dv(x)+d'\cdot Dv(x).
	\end{align*}
	We write the term $M_\xi(s-u(x),\xi-Du(x))-M_\xi(s,\xi)$ in a more suitable form, namely
	\begin{align*}
	c'&=M_\xi(s-u(x),\xi-Du(x))-M_\xi(s,\xi) \\
	&= \underbrace{M_\xi(s-u(x),\xi-Du(x))-M_\xi(s,\xi-Du(x))}_{c'_1(x,s,\xi)}
	+ \underbrace{M_\xi(s,\xi-Du(x))-M_\xi(s,\xi)}_{c'_2(x,s,\xi)},
	\end{align*}
	so that
	$$
	f_v(x,s,\xi)=a'(x,s,\xi)v(x)+b'(x,s)v(x)+(c'_1(x,s,\xi)+c'_2(x,s,\xi))\cdot Dv(x)+d'(x,\xi)\cdot Dv(x).
	$$
	The term $a'$ admits the same growth condition of $a$, cf.\ the
	proof of Theorem~\ref{split2-A}. Also, since
	$$
	c'_1(x,s,\xi) =-\int^1_0 M_{\xi s}(s-\tau u(x),\xi- Du(x))u(x)  d\tau,
	$$
	as for the term $c_1$ in the proof of Theorem~\ref{split2-A} we obtain
	$$
	|c_1'(x,s,\xi)|\leq\eps |\xi|^{p/p'}+ C_\eps|u(x)|^{p/((p-m)p')}+C_\eps |Du(x)|^{p/p'}.
	$$
On the other hand, directly from assumptions \eqref{ass-sub1}-\eqref{ass-sub3} we get
	\begin{equation*}
	|b'(x ,s)| \leq C |u(x)|^{\sigma/\sigma'},\quad
	|c'_2(x ,s,\xi)| \leq C|D u(x)|^{m/m'},\quad
	|d'(x, \xi)| \leq  C |D u(x)|^{p/p'}.
	\end{equation*}
	The conclusion follows then by the same argument carried out in Theorem \ref{split2-A}.
\end{proof}

\noindent
In the spirit of \cite[Lemma 8.3]{willembook}, we have the following

\begin{lemma}\label{likebook}
Under the hypotheses of Theorem \ref{main} or \ref{main2}, let $(y_n)\subset\R^N$ with $|y_n|\rightarrow \infty,$
\begin{align*}
&u_n(\cdot+y_n)\rightharpoonup u \qquad \text{in $W^{1,p}(\R^N)\cap D^{1,m}(\R^N)$},\\
&u_n(\cdot+y_n)\rightarrow u \qquad \text{a.e.\ in $\R^N$},\\
&Du_n(\cdot+y_n)\rightarrow Du \qquad \text{a.e.\ in $\R^N$},\\
&\phi_\infty(u_n)\rightarrow c,\\
&\phi'_\infty(u_n) \rightarrow 0 \qquad \text{in $(W^{1,p}_0(\Omega)\cap D^{1,m}_0(\Omega))^*$}.
\end{align*}
Then $\phi'_\infty(u)=0$ and, setting $v_n:=u_n-u(\cdot-y_n)$, we have
\begin{align}
	\label{decomp-energy}
&\phi_\infty(v_n)\rightarrow c-\phi_\infty(u)\\
\label{gradientvanish}
&\phi_\infty'(v_n)\rightarrow 0\quad \text{in $(W^{1,p}_0(\Omega)\cap D^{1,m}_0(\Omega))^*$},
\end{align}
and $\|v_n\|_p^p=\|u_n\|_p^p-\|u\|_p^p+o(1)$ and $\|v_n\|_m^m=\|u_n\|_m^m-\|u\|_m^m+o(1)$ as $n\to\infty$.
\end{lemma}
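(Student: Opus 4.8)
The plan is to reduce the assertion to the splitting results already established, applied on the whole space $\R^N$ with the constant potential $V_\infty$, exploiting the translation invariance of $\phi_\infty$ and of the norm of $W^{1,p}(\R^N)\cap D^{1,m}(\R^N)$. First I would set $\tilde u_n:=u_n(\cdot+y_n)$. Since $V_\infty$ is constant, a change of variables gives $\phi_\infty(\tilde u_n)=\phi_\infty(u_n)\to c$ and, for every test function $\psi$, $\phi_\infty'(\tilde u_n)(\psi)=\phi_\infty'(u_n)(\psi(\cdot-y_n))$ with $\|\psi(\cdot-y_n)\|_{W^{1,p}(\R^N)\cap D^{1,m}(\R^N)}=\|\psi\|_{W^{1,p}(\R^N)\cap D^{1,m}(\R^N)}$, so that $\phi_\infty'(\tilde u_n)\to 0$ in the dual space as well; by hypothesis $\tilde u_n\rightharpoonup u$, while $\tilde u_n\to u$ and $D\tilde u_n\to Du$ a.e.\ in $\R^N$. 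Thus $(\tilde u_n)$ is a bounded Palais--Smale sequence for $\phi_\infty$ at level $c$, weakly convergent to $u$, with the a.e.\ convergence of the gradients at our disposal.

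Next I would run the splitting machinery with $\Omega=\R^N$ and $V\equiv V_\infty$ (which trivially satisfies $V(x)\to V_\infty$). Lemma~\ref{energysplit}, together with the Brezis--Lieb Lemma applied to $\int_{\R^N}|\tilde u_n|^p$ (cf.\ \cite{BreLieb,willembook}), yields
\[
\lim_{n\to\infty}\phi_\infty(\tilde u_n-u)=\lim_{n\to\infty}\phi_\infty(\tilde u_n)-\phi_\infty(u)=c-\phi_\infty(u).
\]
Since $\phi_\infty'(\tilde u_n)\to 0$, identity \eqref{laprim} holds for $(\tilde u_n)$ with $V\equiv V_\infty$, so Theorem~\ref{split2-A} in the super-quadratic case, or Theorem~\ref{split2-B} under \eqref{ass-sub1}--\eqref{ass-sub3} in the sub-quadratic case, applies: it gives $\phi_\infty'(u)=0$ together with a sequence $(\zeta_n)$ vanishing in $(W^{1,p}(\R^N)\cap D^{1,m}(\R^N))^*$ such that $\phi_\infty'(\tilde u_n-u)(v)=\langle\zeta_n,v\rangle$ for all $v$, i.e.\ $\phi_\infty'(\tilde u_n-u)\to 0$.

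It then remains to translate back. With $v_n=u_n-u(\cdot-y_n)=(\tilde u_n-u)(\cdot-y_n)$, translation invariance gives $\phi_\infty(v_n)=\phi_\infty(\tilde u_n-u)\to c-\phi_\infty(u)$, which is \eqref{decomp-energy}, and, arguing on the dual norm exactly as above, $\phi_\infty'(v_n)\to 0$, which is \eqref{gradientvanish}. Finally, the Brezis--Lieb Lemma applied separately to the sequences $|\tilde u_n|^p$, $|D\tilde u_n|^p$ and $|D\tilde u_n|^m$ gives $\|\tilde u_n\|_p^p=\|\tilde u_n-u\|_p^p+\|u\|_p^p+o(1)$ and $\|\tilde u_n\|_m^m=\|\tilde u_n-u\|_m^m+\|u\|_m^m+o(1)$; using once more the invariances $\|\tilde u_n\|_p=\|u_n\|_p$, $\|\tilde u_n-u\|_p=\|v_n\|_p$ (and likewise for the $m$-norms) yields the claimed norm decompositions.

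The substantive content is entirely contained in Lemma~\ref{energysplit} and Theorems~\ref{split2-A}--\ref{split2-B}, so the only points requiring care are the bookkeeping of the translations and the observation that those statements, although phrased for exterior domains, hold verbatim on $\R^N$ with the constant potential: their proofs use only the point-wise convergence of the gradients, the abstract Palais--Smale structure, and the regularity estimate of Proposition~\ref{regularityres}, all of which remain available on $\R^N$. No new estimate is needed, and I expect this bookkeeping to be the only (mild) obstacle.
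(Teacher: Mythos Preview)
Your approach is essentially the paper's: translate, apply the splitting results on $\R^N$ with $V\equiv V_\infty$, and translate back. There is, however, one step you pass over too quickly. You write that $\phi'_\infty(\tilde u_n)(\psi)=\phi'_\infty(u_n)(\psi(\cdot-y_n))$ and conclude, by translation invariance of the norm, that $\phi'_\infty(\tilde u_n)\to 0$ in the dual of $W^{1,p}(\R^N)\cap D^{1,m}(\R^N)$. But the hypothesis only gives $\phi'_\infty(u_n)\to 0$ in $(W^{1,p}_0(\Omega)\cap D^{1,m}_0(\Omega))^*$, and for a general $\psi\in W^{1,p}(\R^N)\cap D^{1,m}(\R^N)$ the translate $\psi(\cdot-y_n)$ need not lie in $W^{1,p}_0(\Omega)\cap D^{1,m}_0(\Omega)$; the inclusion is guaranteed only when $\psi$ has compact support and $n$ is large enough (depending on ${\rm supp}\,\psi$), which does not yield the uniform bound over the unit ball needed for dual-norm convergence. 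So you cannot invoke Theorem~\ref{split2-A} on $\R^N$ as a black box.

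The paper avoids this by not asserting the full dual convergence of $\phi'_\infty(\tilde u_n)$. It first proves $\phi'_\infty(u)=0$ by testing only against $\varphi\in\mathcal D(\R^N)$ (for which $\varphi(\cdot-y_n)\in\mathcal D(\Omega)$ eventually, since $\R^N\setminus\Omega$ is bounded), and then invokes only the $\xi_n$-part of the splitting (formula~\eqref{primaconcl-rifr}), which relies solely on the a.e.\ convergences and on the regularity of $u$ granted by Proposition~\ref{regularityres}, not on the Palais--Smale hypothesis. The remaining term $\langle\phi'_\infty(\tilde u_n),\varphi(\cdot+y_n)\rangle$ is rewritten, by the reverse change of variables, as $\langle\phi'_\infty(u_n),\varphi\rangle$ with $\varphi\in\mathcal D(\Omega)$, where the original hypothesis applies directly. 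In short, your outline is correct, but the dual-space bookkeeping you flag as ``mild'' is exactly where the argument needs this extra care.
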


\begin{proof}
The energy splitting \eqref{decomp-energy} follows by Theorem~\ref{energytot} applied
with $\Omega=\R^N$ and the sequence $(u_n)$ replaced by $(u_n(\cdot+y_n))$.
Take now $\varphi\in \mathcal D(\Omega)$
with $\|\varphi\|_{W^{1,p}_0(\Omega)\cap D^{1,m}_0(\Omega)}\leq 1$ and
define $\varphi_n:=\varphi(\cdot+y_n)$. Then
$\varphi_n\in {\mathcal D}(\Omega_n)$, where $\Omega_n=\Omega-\{y_n\}\subset\Omega$ for $n$ large.
For any $n\in\N$, we get
$$
\langle\phi_\infty'(v_n),\varphi\rangle=\langle\phi_\infty'(u_n(\cdot+ y_n)-u),\varphi_n\rangle.
$$
By the splitting argument in the proof of Theorem \ref{split2-A}, it follows that
$$
\langle\phi_\infty'(u_n(\cdot+y_n)-u),\varphi_n\rangle=\langle\phi_\infty'(u_n(\cdot+y_n)),
\varphi_n\rangle-\langle\phi_\infty'(u),\varphi_n\rangle+\langle \zeta_n,\varphi_n\rangle,
$$
where $\zeta_n\to 0$ in the dual of $W^{1,p}_0(\Omega)\cap D^{1,m}_0(\Omega)$.
If we prove that $u$ is critical for $\phi_\infty$, then the right-hand side reads as
$\langle\phi_\infty'(u_n),\varphi\rangle+\langle \zeta_n,\varphi_n\rangle,$
and also the second limit \eqref{gradientvanish} follows.
To prove that $\phi'_\infty(u)=0$ we observe that, for all $\varphi$ in ${\mathcal D}(\R^N)$,
$$
\langle\phi_\infty'(u_n(\cdot+y_n)),\varphi\rangle\rightarrow \langle\phi_\infty'(u),\varphi\rangle,\quad
|\langle\phi_\infty'(u_n(\cdot+y_n)),\varphi\rangle|\leq \|\phi_\infty'(u_n)\|_*\|\varphi\|_{W^{1,p}_0(\Omega)\cap D^{1,m}_0(\Omega)}\to 0.
$$
Indeed, defining $\hat\varphi_n:=\varphi(\cdot-y_n)$, since  $|y_n|\rightarrow \infty$ as $n\to\infty$,
we have ${\rm supp} \,\hat\varphi_n \subset \Omega,$ for $n$ large enough and
$\|\hat\varphi_n\|_{W^{1,p}_0(\Omega)\cap D^{1,m}_0(\Omega)}
=\|\varphi\|_{{W^{1,p}(\R^N)\cap D^{1,m}(\R^N)}}$.
The last assertion follows by using Brezis-Lieb Lemma \cite{BreLieb}.
\end{proof}

We can finally come to the proof of the main results.

\section{Proof of Theorems~\ref{main} and~\ref{main2} completed}
We follow the scheme of the proof given in \cite[p.121]{willembook}. Let $(u_n)\subset W^{1,p}_0(\Omega)\cap D^{1,m}_0(\Omega)$
be a bounded Palais-Smale sequence for $\phi$ at the level $c\in\R$. Hence,
there exists a sequence $(w_n)$ in the dual of
$W^{1,p}_0(\Omega)\cap D^{1,m}_0(\Omega)$ such that $w_n\to 0$ and $\phi(u_n)\to c$ as $n\to\infty$ and,
	for all $v\in W^{1,p}_0(\Omega)\cap D^{1,m}_0(\Omega)$, we have
	\begin{align*}
	\int_\Omega L_\xi (Du_n)\cdot Dv&+\int_\Omega M_\xi (u_n,Du_n)\cdot Dv+\int_\Omega M_s (u_n,Du_n)v \\
	& +\int_\Omega V(x)|u_n|^{p-2}u_nv=\int_\Omega g(u_n)v+\langle w_n, v\rangle.
	\end{align*}
	Since $(u_n)$ is bounded in $W^{1,p}_0(\Omega)\cap D^{1,m}_0(\Omega)$, up to a subsequence, it converges weakly to some function $v_0\in W^{1,p}_0(\Omega)\cap D^{1,m}_0(\Omega)$
	and, by virtue of Proposition~\ref{convergenze}, $(u_n)$ and $(Du_n)$ converge to $v_0$ and $Dv_0$ a.e.\ in $\Omega$, respectively.
	In turn (see also the proof of Theorem~\ref{split2-A}) it follows
	\begin{equation*}
	\int_\Omega L_\xi (Dv_0)\cdot Dv+\int_\Omega M_\xi (v_0,Dv_0)\cdot Dv+\int_\Omega M_s (v_0,Dv_0)v
	+\int_\Omega V(x)|v_0|^{p-2}v_0v=\int_\Omega g(v_0)v,
	\end{equation*}
	for any $v\in W^{1,p}_0(\Omega)\cap D^{1,m}_0(\Omega)$.
	By combining Theorem~\ref{energytot} and Theorem~\ref{split2-A}, setting $u_n^1:=u_n-v_0$
	and thinking the functions on $\R^N$ after extension to zero out of $\Omega$, get
	\begin{align}
	&	\quad \phi_\infty(u_n^1)\to c-\phi(v_0),\quad n\to\infty,   \label{ppripropp} \\
	\int_{\R^N} L_\xi (Du_n^1)\cdot Dv&+\int_{\R^N} M_\xi (u_n^1,Du_n^1)\cdot Dv+\int_{\R^N} M_s (u_n^1,Du_n^1)v    \label{secpropp}  \\
	& +\int_{\R^N} V_\infty|u_n^1|^{p-2}u_n^1v=\int_{\R^N} g(u_n^1)v+\langle w_n^1, v\rangle. \notag
	\end{align}
	where $(w_n^1)$ is a sequence in the dual of $W^{1,p}_0(\Omega)\cap D^{1,m}_0(\Omega)$ with $w_n^1\to 0$ as $n\to\infty$.
	In turn, it follows that $(u_n^1)$ is Palais-Smale sequence for $\phi_\infty$ at the energy level $c-\phi(v_0)$. In addition,
	$$
	\|u_n^1\|_p^p= \|u_n\|_p^p-\|v_0\|_p^p+o(1),\qquad
	\|u_n^1\|_m^m= \|u_n\|_m^m-\|v_0\|_m^m+o(1),\quad\text{as $n\to\infty$,}
	$$
	by the Brezis-Lieb Lemma \cite{BreLieb}. Let us now define
 $$
 \varpi:=\limsup_{n\to \infty}\sup_{y\in \R^N}\int_{B(y,1)}|u^1_n|^{p}.
 $$
If it is the case that $\varpi=0$, then, according to \cite[Lemma I.1]{lions}, $(u_n^1)$
converges to zero in $L^r(\R^N)$ for every $r\in (p, p^*).$
Then, one obtains that
$$
\lim_{n\to\infty}\int_\Omega g(u_n^1)u_n^1=0,\qquad
\int_\Omega M_s (u_n^1,Du_n^1)u_n^1\geq 0,
$$
where the inequality follows by the sign condition \eqref{ilsegnos}. In turn,
testing equation \eqref{secpropp} with $v=u_n^1$, by the coercivity and convexity of $\xi\mapsto L(\xi), M(s,\xi)$, we have
\begin{align*}
	&\limsup_{n\to\infty}\Big[\nu\int_{\R^N} |Du_n^1|^p+\nu\int_{\R^N} |Du_n^1|^m
	+V_\infty\int_{\R^N} |u_n^1|^{p}\Big] \\
	&\leq\limsup_{n\to\infty}\Big[\int_{\R^N} L_\xi (Du_n^1)\cdot Du_n^1+\int_{\R^N} M_\xi (u_n^1,Du_n^1)\cdot Du_n^1
+\int_{\R^N} V_\infty|u_n^1|^{p}\Big]\leq 0,
\end{align*}
yielding that $(u_n^1)$ strongly converges to zero in $W^{1,p}(\R^N)\cap D^{1,m}(\R^N)$,
concluding the proof in this case. If, on the contrary, it holds $\varpi>0$, then, there exists an unbounded sequence
$(y_n^1)\subset\R^N$ with $\int_{B(y_n^1,1)}|u^1_n|^p>\varpi/2$. Whence, let us consider $v_n^1:=u_n^1(\cdot+y_n^1)$,
which, up to a subsequence, converges weakly and pointwise to some  $v_1\in W^{1,p}(\R^N)\cap D^{1,m}(\R^N)$,
which is nontrivial, due to the inequality $\int_{B(0,1)}|v_1|^p\geq \varpi/2$. Notice that, of course,
\begin{equation*}
\lim_{n\to\infty}\phi_\infty(v_n^1)=\lim_{n\to\infty}\phi_\infty(u_n^1)=c-\phi(v_0).
\end{equation*}
Moreover, since $|y_n^1|\to\infty$ and $\Omega$ is an exterior domain, for all $\varphi\in {\mathcal D}(\R^N)$
we have $\varphi(\cdot-y_n^1)\in{\mathcal D}(\Omega)$ for $n\in\N$ large enough.
Whence, in light of equation~\eqref{secpropp}, for every $\varphi\in {\mathcal D}(\R^N)$ we get
\begin{align*}
& \int_{\R^N} L_\xi (Dv_n^1)\cdot D\varphi+\int_{\R^N} M_\xi (v_n^1,Dv_n^1)\cdot D\varphi
+\int_{\R^N} M_s (v_n^1,Dv_n^1)\varphi \\
& +\int_{\R^N} V_\infty|v_n^1|^{p-2}(v_n^1)\varphi
 -\int_{\R^N} g(v_n^1)\varphi =
 \int_{\R^N} L_\xi (Du_n^1)\cdot D\varphi(\cdot-y_n^1)  \\
&+\int_{\R^N} M_\xi (u_n^1,Du_n^1)\cdot D\varphi(\cdot-y_n^1)
+\int_{\R^N} M_s (u_n^1,Du_n^1)\varphi(\cdot-y_n^1)
 +\int_{\R^N} V_\infty|u_n^1|^{p-2}(u_n^1)\varphi(\cdot-y_n^1) \\
& -\int_{\R^N} g(u_n^1)\varphi(\cdot-y_n^1)=\langle w_n^1, \varphi(\cdot+y_n^1)\rangle.
\end{align*}
Defining the form $\langle \hat w_n^1, \varphi\rangle:=\langle w_n^1, \varphi(\cdot-y_n^1)\rangle$
for all $\varphi\in {\mathcal D}(\R^N)$, we conclude that
\begin{align*}
\int_{\R^N} L_\xi (Dv_n^1)\cdot D\varphi &+\int_{\R^N} M_\xi (v_n^1,Dv_n^1)\cdot D\varphi
+\int_{\R^N} M_s (v_n^1,Dv_n^1)\varphi \\
& +\int_{\R^N} V_\infty|v_n^1|^{p-2}(v_n^1)\varphi
 -\int_{\R^N} g(v_n^1)\varphi =\langle \hat w_n^1,\varphi\rangle,\quad\forall  \varphi\in {\mathcal D}(\R^N).
\end{align*}
Since $(\hat w_n^1)$ converges to zero in the dual of $W^{1,p}(\R^N)\cap D^{1,m}(\R^N)$,
it follows by Proposition~\ref{convergenze} (with $V=V_\infty$ and $\Omega=\R^N$) that the gradients
$Dv_n^1$ converge point-wise to $Dv_1$, namely
\begin{equation}
	\label{Dquasiov1}
Dv_n^1(x)\to Dv_1(x),\qquad\text{a.e.\ in $\R^N$.}
\end{equation}
Setting $u_n^2:=u_n^1-v_1(\cdot-y_n^1)$, in light of \eqref{ppripropp}-\eqref{secpropp} and \eqref{Dquasiov1},
we can apply Lemma \ref{likebook} to the sequence $(v_n^1)$, getting
\begin{equation*}
\lim_{n\to\infty}\phi_\infty(u_n^2)=c-\phi(v_0)-\phi_\infty(v_1),
\end{equation*}
as well as $\phi_\infty(v_1)=0$ and, furthermore, for every $v\in W^{1,p}_0(\Omega)\cap D^{1,m}_0(\Omega)$, we have
\begin{align*}
&	\int_{\R^N} L_\xi (Du_n^2)\cdot Dv+\int_{\R^N} M_\xi (u_n^2,Du_n^2)\cdot Dv+\int_{\R^N} M_s (u_n^2,Du_n^2)v \\
&	+\int_{\R^N} V_\infty|u_n^2|^{p-2}u_n^2v-\int_{\R^N} g(u_n^2)v =\langle \zeta_n^2, v\rangle,
\end{align*}
where $(\zeta_n^2)$ goes to zero in the dual of $W^{1,p}_0(\Omega)\cap D^{1,m}_0(\Omega)$.
In turn, $(u_n^2)\subset W^{1,p}(\R^N)\cap D^{1,m}(\R^N)$ is a
Palais-Smale sequence for $\phi_\infty$ at the energy level $c-\phi(v_0)-\phi(v_1)$.
Arguing on $(u_n^2)$ as it was done for $(u_n^1)$, either $u_n^2$ goes to zero strongly
in $W^{1,p}(\R^N)\cap D^{1,m}(\R^N)$ or we can generate a new $(u_n^3)$.
By iterating the above procedure, one obtains diverging sequences $(y_n^i)$, $i=1,\dots,k-1$, solutions
$v_i$ on $\R^N$ to the limiting problem, $i=1,\dots,k-1$ and a sequence
$$
u^k_n=u_n-v_0-v_1(\cdot-y_n^1)-v_2(\cdot-y_n^2)-\cdots-v_{k-1}(\cdot-y_n^{k-1}),
$$
such that (recall again Lemma \ref{likebook}) as $n\to\infty$
\begin{align}
\label{prima-norme}
 \|u^k_n\|^p_p & = \|u_n\|^p_p -\|v_0\|^p_p- \|v_1\|^p_p- \cdots - \|v_{k-1}\|^p_p+o(1), \\
 \|u^k_n\|^m_m & = \|u_n\|^m_m -\|v_0\|^m_m- \|v_1\|^m_m-\cdots- \|v_{k-1}\|^m_m+o(1), \notag
\end{align}
as well as $\phi_\infty'(u^k_n)\to 0$ in $(W^{1,p}_0(\Omega)\cap D^{1,m}_0(\Omega))^*$ and
\begin{equation*}
 \phi_\infty(u^k_n)\to c- \phi(v_0)-\sum_{j=1}^{k-1}\phi_\infty(v_j).
\end{equation*}
Notice that the iteration is forced to end up after a
finite number $k\geq 1$ of steps. Indeed, for every nontrivial critical point $v\in W^{1,p}(\R^N)\cap D^{1,m}(\R^N)$ of
$\phi_\infty$ we have,
\begin{equation*}
\int_{\R^N} L_\xi (Dv)\cdot Dv+\int_{\R^N} M_\xi (v,Dv)\cdot Dv+\int_{\R^N} M_s (v,Dv)v
+\int_{\R^N} V_\infty|v|^{p}=\int_{\R^N} g(v)v,
\end{equation*}
yielding by the sign condition, the coercivity-convexity conditions and the growth of $g$,
\begin{equation}
	\label{bddbeloww}
\min\{\nu,V_\infty\}\|v\|_p^p+\|Dv\|_{L^m(\R^N)}^m\leq C_g\|v\|_{L^\sigma(\R^N)}^\sigma\leq C_g S_{p,\sigma}\|v\|_{p}^\sigma,
\end{equation}
so that, due to $\sigma>p$, it holds
\begin{equation}
	\label{boundbelowww}
\|v\|_p^p\geq\left[\frac{\min\{\nu, V_\infty\}}{C_g S_{p,\sigma}}\right]^{\frac{p}{\sigma-p}}=:\Gamma_\infty>0,
\end{equation}
thus yielding from \eqref{prima-norme}
$$
\|u^k_n\|^p_p\leq\|u_n\|^p_p-\|v_0\|^p_p-(k-1)\Gamma_\infty+o(1).
$$
By boundedness of $(u_n),$ $k$ has to be finite.
Hence $u^k_n\rightarrow0$ strongly in  $W^{1,p}(\R^N)\cap D^{1,m}(\R^N)$ at some finite index $k\in\N$.
This concludes the proof.
\qed
\smallskip

\section{Proof of Corollary~\ref{compat}}
As a byproduct of the proof of the Theorems~\ref{main} and~\ref{main2},
since the $p$ norm is bounded away from zero on the set of nontrivial critical points of $\phi_\infty,$ cf.\ \eqref{bddbeloww},we can
estimate $\phi_\infty$ from below on that set. In order to do so, we
use condition \eqref{bounddc}. For any nontrivial critical point
of the functional $\phi_\infty$, we have (see the proof of Proposition~\ref{bddd})
\begin{align*}
\mu\phi_\infty(v)\geq \delta \int_\Omega |Dv|^p+\frac{\mu-p}{p}V_\infty\int_{\R^N}|v|^p\geq \min\left\{\delta,\frac{\mu-p}{p}V_\infty\right\}\|v\|^p_p.
\end{align*}
An analogous argument applies to $\phi,$  yielding for any nontrivial critical point
\begin{align*}
\mu\phi(u)\geq \delta \int_\Omega |Du|^p+\frac{\mu-p}{p}V_0\int_{\Omega}|u|^p\geq \min\left\{\delta,\frac{\mu-p}{p}V_0\right\}\|u\|^p_p.
\end{align*}
Now notice that, recalling \eqref{boundbelowww} and a similar variant for the norm of the critical points of $\phi$ in place of $\phi_\infty$, setting also
$$
e_\infty:=\min\left\{\frac{\delta}{\mu},\frac{\mu-p}{\mu p}V_\infty\right\}\Gamma_\infty,\quad
e_0:=\min\left\{\frac{\delta}{\mu},\frac{\mu-p}{\mu p}V_0\right\}\Gamma_0,\quad
\Gamma_0:=\left[\frac{\min\{\nu, V_0\}}{C_g S_{p,\sigma}}\right]^{\frac{p}{\sigma-p}}>0,
$$
from Theorems \ref{main} or \ref{main2} we have
$c\geq \ell e_0+k e_\infty$
for some $\ell\in\{0,1\}$ and non-negative integer $k.$  Condition $c<c^*:=e_\infty$
implies necessarily $k<1$, namely $k=0$. This provides the desired compactness result, using Theorems \ref{main} or \ref{main2}.
\qed

\section{Proof of Corollary~\ref{minsolve}}
Defining the functionals $J,Q:W^{1,p}_0(\Omega)\cap D^{1,m}_0(\Omega)\to\R$ by
$$
J(u):=\frac{1}{p}\int_{\Omega}L(Du)+\frac{1}{m}\int_{\Omega}M(Du)+\frac{1}{p}\int_{\Omega}V(x)|u|^p,\qquad Q(u):=\frac{\S_\Omega}{\sigma}\int_{\Omega}|u|^\sigma,
$$
and given a minimization sequence $(u_n)$ for problem \eqref{mminbbpp}, by Ekeland's variational principle,
without loss of generality we can replace it by a new minimization sequence, still denoted by $(u_n)$ for which
there exists a sequence $(\lambda_n)\subset \R$ such that for all $v\in W^{1,p}_0(\Omega)\cap D^{1,m}_0(\Omega)$
$$
J'(u_n)(v)-\lambda_n Q'(u_n)(v)=\langle w_n,v\rangle,\quad \text{with $w_n\to 0$ in the dual of $W^{1,p}_0(\Omega)\cap D^{1,m}_0(\Omega)$}.
$$
Taking into account the homogeneity of $L$ and $M$, choosing $v=u_n$ this means
$$
\int_{\Omega}L(Du_n)+\int_{\Omega}M(Du_n)+\int_{\Omega}V(x)|u_n|^p-\S_\Omega\lambda_n\int_{\Omega}|u_n|^\sigma=\langle w_n,u_n\rangle.
$$
Up to a subsequence, let us define 
$$
\lambda:=\frac{1}{\S_\Omega}\lim_n \int_{\Omega}L(Du_n)+M(Du_n)+V(x)|u_n|^p.
$$
Since $\|u_n\|_{L^\sigma(\Omega)=1}$ for all $n$ and 
$\int_{\Omega}L(Du_n)/p+M(Du_n)/m+V(x)|u_n|^p/p\to \S_\Omega$ as $n\to\infty$, this means that $\lambda\in [m,p]$ and $(u_n)$
is a Palais-Smale sequence for the functional $I(u):=J(u)-\lambda Q(u)$ at an energy level
\begin{equation}
	\label{levelcc}
c\leq \frac{\sigma-m}{\sigma }\,\S_\Omega ,
\end{equation}
since it holds (recall that $p\geq m$), as $n\to\infty$,
\begin{align*}
I(u_n) &=\frac{1}{p}\int_{\Omega}L(Du_n)+\frac{1}{m}\int_{\Omega}M(Du_n)+\frac{1}{p}\int_{\Omega}V(x)|u_n|^p-\frac{\S_\Omega}{\sigma} \lambda \\
& \leq \S_\Omega-\frac{\S_\Omega}{\sigma}\lambda +o(1)\leq \S_\Omega \frac{\sigma-m}{\sigma}+o(1).
\end{align*}
From Corollary~\ref{compat} (applied with $L(Du)$ replaced by $L(Du)/p$,
$M(u,Du)$ replaced by $M(Du)/m$ and $G(u)=\frac{\S_\Omega}{\sigma} \lambda|s|^{\sigma}$), the compactness of
$(u_n)$ holds provided (in the notations of Corollary~\ref{compat})
$$
c<\min\left\{\frac{\delta}{\mu},\frac{\mu-p}{\mu p}V_\infty\right\}\left[\frac{\min\{\nu, V_\infty\}}{C_g S_{p,\sigma}}\right]^{\frac{p}{\sigma-p}}.
$$
In our case, we can take
$\mu=\sigma$,
$\delta=\frac{\sigma-p}{p}$,
$C_g=p\S_\Omega$,
$V_\infty=1$,
$\nu=1$,
$S_{p,\sigma}=(p\S_{\R^N})^{-\sigma/p}$,
yielding
$$
c<\frac{\sigma-p}{\sigma }\,{\S_{\R^N}^{\frac{\sigma}{\sigma-p}}}/
{\S_{\Omega}^{\frac{p}{\sigma-p}}}.
$$
Hence, finally, by combining this conclusion with \eqref{levelcc} the compactness (and
in turn the solvability of the minimization problem) holds if \eqref{compactnesscondd} holds,
concluding the proof.

\bigskip

\bigskip

\end{document}